\documentclass[11pt]{article}
\usepackage{amsmath, amsthm, amscd, amsfonts, amssymb, graphicx,enumerate,color}

\setlength{\topmargin}{-45pt}      
\setlength{\oddsidemargin}{0cm}    
\setlength{\evensidemargin}{0cm}   
\setlength{\textheight}{23.7cm}    
\setlength{\textwidth}{16cm}       

\theoremstyle{definition}
\newtheorem{theorem}{Theorem}[section]

\newtheorem{corollary}[theorem]{Corollary}
\newtheorem{proposition}[theorem]{Proposition}
\newtheorem{remark}[theorem]{Remark}

\begin{document}
\title{Refined Young inequality and its application to divergences}
\author{Shigeru Furuichi$^1$\footnote{E-mail:furuichi@chs.nihon-u.ac.jp} and Nicu\c{s}or Minculete$^2$\footnote{E-mail:minculeten@yahoo.com}\\
$^1${\small Department of Information Science,}\\
{\small College of Humanities and Sciences, Nihon University,}\\
{\small 3-25-40, Sakurajyousui, Setagaya-ku, Tokyo, 156-8550, Japan}\\
$^2${\small Transilvania University of Bra\c{s}ov, Bra\c{s}ov, 500091, Rom{a}nia}\\}
\date{}
\maketitle
{\bf Abstract.} 
We give bounds on the difference between the weighted arithmetic mean and the weighted geometric mean. These imply refined Young inequalities and the reverses of the Young inequality. 
 We also study some properties on the difference between the weighted arithmetic mean and the weighted geometric mean.  
 Applying the newly obtained inequalities, we show some results on the Tsallis divergence, the R\'{e}nyi divergence, the Jeffreys-Tsallis divergence and the Jensen-Shannon-Tsallis divergence.
\vspace{3mm}

{\bf Keywords : } Young inequality, arithmetic mean, geometric mean, Heinz mean, Cartwright-Field inequality, Tsallis divergence, R\'{e}nyi divergence,  Jeffreys-Tsallis divergence, Jensen-Shannon-Tsallis divergence.
\vspace{3mm}

{\bf 2010 Mathematics Subject Classification : } 
 26D15, 26E60, 94A17.  
\vspace{3mm}


\section{Introduction}
The Young integral inequality is the source of many basic inequalities. Young \cite{Yon1912} proved the following: suppose that $f:\left[ {0,\infty } \right) \to \left[ {0,\infty } \right)$  is an increasing continuous function such that $f\left( 0 \right) = 0$ and  $\mathop {\lim }\limits_{x \to \infty } f\left( x \right) = \infty $. Then
\begin{equation}\label{sec1_eq01}
ab \le \int\limits_0^a {f\left( x \right)dx + \int\limits_0^b {{f^{ - 1}}\left( x \right)dx} },
\end{equation}
with equality iff $b=f(a)$. Such a gap is often used to define the Fenchel-Legendre divergence in information geometry \cite{MMN2000,N2021}.
For  $f\left( x \right) = {x^{p - 1}}, (p > 1)$,  in inequality \eqref{sec1_eq01}, we deduce the classical Young inequality:
\begin{equation}\label{sec1_eq02}
ab \le \frac{{{a^p}}}{p} + \frac{{{b^q}}}{q},
\end{equation}
for all  $a,b > 0$ and  $p,q > 1$ with  $\frac{1}{p} + \frac{1}{q} = 1$. The equality occurs if and only if $a^p=b^q$. 

Minguzzi \cite{Ming2008}, proved a reverse Young inequality in the following way:
\begin{equation}\label{sec1_eq03}
0 \le \frac{{{a^p}}}{p} + \frac{{{b^q}}}{q} - ab \le \left( {b - {a^{p - 1}}} \right)\left( {{b^{q - 1}} - a} \right),
\end{equation}
for all  $a,b > 0$ and  $p,q > 1$ with  $\frac{1}{p} + \frac{1}{q} = 1$.

The classical Young inequality \eqref{sec1_eq02} is rewitten as
\begin{equation}\label{sec1_eq03a}
a^{1/p}b^{1/q}\le \dfrac{a}{p}+\dfrac{b}{q}
\end{equation}
by putting $a:=a^{1/p}$ and $b:=b^{1/q}$. 
Putting again
$$
a:=\frac{a_j^p}{\sum\limits_{j=1}^na_j^p},\,\,\,\,b:=\frac{b_j^q}{\sum\limits_{j=1}^nb_j^q}
$$
in the inequality \eqref{sec1_eq03a}, we obtain the famous H\"older inequality:
$$
\sum_{j=1}^na_jb_j\le \left(\sum_{j=1}^na_j^p\right)^{1/p}\left(\sum_{j=1}^nb_j^q\right)^{1/q},\,\,\,\,\left(p,q>1,\,\,\frac{1}{p}+\frac{1}{q}=1\right)
$$
for $a_1,\cdots,a_n>0$ and $b_1,\cdots,b_n>0$,
Thus the inequality \eqref{sec1_eq02} is often reformulated as
\begin{equation}\label{sec1_eq04}
{a^p}{b^{1 - p}} \le pa + \left( {1 - p} \right)b,\,\,a,b> 0,\,\,0\leq p \leq 1
\end{equation}
by putting $1/p=:p$ (then $1/q=1-p$) in the inequality \eqref{sec1_eq03a}. It is notable that $\alpha$-divergence is related to the difference between the weighted arithmetic mean and the weighted geometric mean \cite{N2020}.
 For  $p=1/2$, we deduce the inequality between the geometric mean and the arithmetic mean, $G\left( {a,b} \right): = \sqrt {ab}  \le \frac{{a + b}}{2} = :A\left( {a,b} \right)$. The Heinz mean \cite[Eq.(3)]{Bha2006}(See also \cite{FGG2019}) is defined as ${H_p}\left( {a,b} \right) = \dfrac{{{a^p}{b^{1 - p}} + {a^{1 - p}}{b^p}}}{2}$ and $G\left( {a,b} \right)  \le {H_p}\left( {a,b} \right) \le A\left( {a,b} \right)$. 
 
 Especially, when we discuss about the Young inequality, we will refer to the last form. We consider the following expression
\begin{equation}\label{sec1_eq_add01}
d_p(a,b): = pa + \left( {1 - p} \right)b - {a^p}{b^{1 - p}}
\end{equation}
 which implies that $d_p(a,b) \ge 0$ and $d_p(a,a) = d_0(a,b) = d_1(a,b) = 0$. We remark the following properties: 
 $$d_p(a,b)=b\cdot d_p\left(\frac{a}{b},1\right),\,\,d_p(a,b)=d_{1-p}(b,a),\,\,d_p\left(\frac{1}{a},\frac{1}{b}\right)=\frac{1}{ab}\cdot d_p(b,a).$$
 
 Cartwright-Field inequality (see e.g. \cite{CF1978}) is often written as follows:
\begin{equation}\label{sec1_eq05}
\frac{1}{2}p\left( {1 - p} \right)\frac{{{{\left( {a - b} \right)}^2}}}{{\max \{ a,b\} }} \le d_p(a,b) \le \frac{1}{2}p\left( {1 - p} \right)\frac{{{{\left( {a - b} \right)}^2}}}{{\min \{ a,b\} }}
\end{equation}
for $a,b> 0$ and $0\leq p \leq 1$.
This double inequality gives an improvement of the Young inequality and, at the same time, gives a reverse inequality for the Young inequality.

Kober proved in \cite{Kob1958} a general result related to an improvement of the inequality between arithmetic and geometric means, which for $n = 2$ implies the inequality:
\begin{equation}\label{sec1_eq06}
r{\left( {\sqrt a  - \sqrt b } \right)^2} \le d_p(a,b) \le \left( {1 - r} \right){\left( {\sqrt a  - \sqrt b } \right)^2}
\end{equation}
where  $a,b> 0$, $0\leq p \leq 1$ and $r = \min \left\{ {p,1 - p} \right\}$. This inequality was rediscovered by Kittaneh and Manasrah in \cite{KM2010}. (See also \cite{BK1981}.)

Finally, we found, in \cite{Minc2011}, another improvement of the Young inequality and a reverse inequality, given as:
\begin{equation}\label{sec1_eq07}
r{\left( {\sqrt a  - \sqrt b } \right)^2} + A\left( p \right){\log ^2}\left( {\frac{a}{b}} \right) \le d_p(a,b) \le \left( {1 - r} \right){\left( {\sqrt a  - \sqrt b } \right)^2} + B\left( p \right){\log ^2}\left( {\frac{a}{b}} \right)
\end{equation}
where  $a,b \ge 1$, $0< p < 1$ and $r = \min \left\{ {p,1 - p} \right\}$ with
$A\left( p \right) = \frac{{p\left( {1 - p} \right)}}{2} - \frac{r}{4},B\left( p \right) = \frac{{p\left( {1 - p} \right)}}{2} - \frac{{1 - r}}{4}$.
It is remarkable that the inequalities \eqref{sec1_eq07} give a further refinement of \eqref{sec1_eq06}, since $A(p) \geq 0$ and $B(p)\le 0$.

In \cite{FM2011}, we also presented two inequalities which give two different reverse inequalities for the Young inequality:
\begin{equation}\label{sec1_eq08}
0 \le d_p(a,b) \le {a^p}{b^{1 - p}}\exp \left\{ {\frac{{p\left( {1 - p} \right){{\left( {a - b} \right)}^2}}}{{{{\min }^2}\{ a,b\} }}} \right\} - {a^p}{b^{1 - p}}
\end{equation}
and
\begin{equation}\label{sec1_eq09}
0 \le d_p(a,b) \le p\left( {1 - p} \right){\log ^2}\left( {\frac{a}{b}} \right)\max \{ a,b\} 
\end{equation}
where  $a,b> 0$, $0\leq p \leq 1$. 
See \cite[Chapter 2]{FM2020} for recent advances on refinements and reverses of the Young inequality.

The $\alpha$-divergence is related to the difference of a weighted arithmetic mean with a geometric mean  \cite{N2020}. We mention that the gap is used information geometry to define the Fenchel-Legendre divergence \cite{MMN2000},\cite{N2021}. We give bounds on the difference between the weighted arithmetic mean and the weighted geometric mean. These imply refined Young inequalities and the reverses of the Young inequality. We also study some properties on the difference between the weighted arithmetic mean and the weighted geometric mean. Applying the newly obtained inequalities, we show some results on the Tsallis divergence, the Rényi divergence, the Jeffreys-Tsallis divergence and the Jensen-Shannon-Tsallis divergence \cite{Lin1991}, \cite{Sib1969}. The  parametric Jensen-Shannon  divergence  can  be  used  to  detect  unusual  data,  and  that  one  can  use  it  also  as  a  means  to  perform  relevant  analysis  of  fire  experiments \cite{MAN2020}.

\section{Main results}
We give estimates on $d_p(a,b)$ and also study the properties  of $d_p(a,b)$.
We give the following estimates of  $d_p(a,b)$, firstly.
\begin{theorem}\label{sec2_theorem_1}
For $0<a,b\leq 1$ and $0\leq p\le 1$, we have
\begin{equation}\label{sec2_eq01_1}
r{\left( {\sqrt a  - \sqrt b } \right)^2} + A\left( p \right)ab\cdot{\log ^2}\left( {\frac{a}{b}} \right) \le d_p(a,b) \le \left( {1 - r} \right){\left( {\sqrt a  - \sqrt b } \right)^2} + B\left( p \right)ab\cdot{\log ^2}\left( {\frac{a}{b}} \right)
\end{equation}
where $r = \min \left\{ {p,1 - p} \right\}$ and
$A\left( p \right) = \frac{{p\left( {1 - p} \right)}}{2} - \frac{r}{4},B\left( p \right) = \frac{{p\left( {1 - p} \right)}}{2} - \frac{{1 - r}}{4}$.
\end{theorem}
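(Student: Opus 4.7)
The plan is to derive the claim directly from the already known inequality \eqref{sec1_eq07}, which is valid on $[1,\infty)\times[1,\infty)$, by performing the reciprocal substitution $a\mapsto 1/a,\ b\mapsto 1/b$. Since the hypothesis $0<a,b\le 1$ forces $1/a,1/b\ge 1$, the inequality \eqref{sec1_eq07} applies at the transformed point.

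The algebraic input I need is already recorded in the introduction: the two identities
\begin{equation*}
d_p(a,b)=d_{1-p}(b,a),\qquad d_p\!\left(\tfrac{1}{a},\tfrac{1}{b}\right)=\frac{1}{ab}\,d_p(b,a),
\end{equation*}
whose composition yields $d_{1-p}(1/a,1/b)=\frac{1}{ab}\,d_p(a,b)$. In parallel, since both $p(1-p)$ and $r=\min\{p,1-p\}$ are invariant under $p\leftrightarrow 1-p$, so are $A(p)$ and $B(p)$; hence the target inequality is itself invariant under $p\mapsto 1-p$, and it suffices to prove it in the form obtained after that swap. The remaining geometric ingredients are the elementary identities
\begin{equation*}
\left(\sqrt{1/a}-\sqrt{1/b}\right)^{2}=\frac{(\sqrt{a}-\sqrt{b})^{2}}{ab},\qquad \log^{2}\!\left(\frac{1/a}{1/b}\right)=\log^{2}\!\left(\frac{a}{b}\right).
\end{equation*}

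With these in hand, I would apply \eqref{sec1_eq07} to the triple $(1/a,1/b,1-p)$. The two flanking quantities in \eqref{sec1_eq07} become $r(\sqrt{a}-\sqrt{b})^{2}/(ab)+A(p)\log^{2}(a/b)$ and $(1-r)(\sqrt{a}-\sqrt{b})^{2}/(ab)+B(p)\log^{2}(a/b)$, while the middle term equals $d_{1-p}(1/a,1/b)=d_p(a,b)/(ab)$. Multiplying the whole chain by $ab$ produces exactly \eqref{sec2_eq01_1}; note how the weight $ab$ migrates onto the logarithmic term, which is the only structural change from \eqref{sec1_eq07}.

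The degenerate cases $p\in\{0,1\}$ and $a=b$ are not covered by the strict hypothesis $0<p<1$ of \eqref{sec1_eq07}, but both sides of \eqref{sec2_eq01_1} vanish there, so they follow by continuity (or by direct inspection after noting $A(0)=A(1)=-1/4$, $B(0)=B(1)=-1/4$, which leave the remaining $(\sqrt a-\sqrt b)^2$ dominating on the right). I do not anticipate a serious obstacle: the entire argument is a reciprocal-symmetry transfer, and the only thing to watch carefully is bookkeeping of the $1/(ab)$ factor and the symmetry $A(p)=A(1-p)$, $B(p)=B(1-p)$.
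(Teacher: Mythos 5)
Your proposal is correct and follows essentially the same route as the paper: apply \eqref{sec1_eq07} at $(1/a,1/b)$, use $d_p(1/a,1/b)=\frac{1}{ab}d_{1-p}(a,b)$ together with the symmetries $A(p)=A(1-p)$, $B(p)=B(1-p)$, and multiply through by $ab$. Only a trivial slip in your parenthetical: $A(0)=A(1)=0$ (not $-1/4$; it is $B(0)=B(1)=-1/4$), but your primary continuity argument for the endpoint cases is sound, so nothing is affected.
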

\begin{proof}
For $p=0$ or $p=1$ or $a=b$, we have equality. We assume $a\neq b$ and $0<p<1$.
Because $0<a,b\le 1$, we have $\frac{1}{a},\frac{1}{b}\ge 1$, so, applying inequality \eqref{sec1_eq07}, we deduce the following relation:
\begin{equation}\label{sec2_eq02_1}
r{\left( {\frac{1}{\sqrt a}  - \frac{1}{\sqrt b} } \right)^2} + A\left( p \right){\log ^2}\left( {\frac{b}{a}} \right) \le d_p\left(\frac{1}{a},\frac{1}{b}\right) \le \left( {1 - r} \right){\left( {\frac{1}{\sqrt a}  - \frac{1}{\sqrt b}} \right)^2} + B\left( p \right){\log ^2}\left( {\frac{b}{a}} \right).
\end{equation}
But, we known that  $d_p(\frac{1}{a},\frac{1}{b})=\frac{1}{ab}\cdot d_{1-p}(a,b)$ and if we replace $p$ by $1-p$ in relation \eqref{sec2_eq02_1} and because $A(p)=A(1-p), B(1-p)=B(p)$, then we proved the inequality from the statement.
\end{proof}

\begin{theorem}\label{sec2_theorem00}
For $a \geq b >0$ and $0<p\le 1$, we have
$$
\frac{p(a-b)(a^{1-p}-b^{1-p})}{2a^{1-p}}\le d_p(a,b)\le  \frac{p(a-b)(a^{1-p}-b^{1-p})}{a^{1-p}}.
$$
\end{theorem}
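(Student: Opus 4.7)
The plan is to reduce everything to a one-variable integral representation of $d_p(a,b)$ and then exploit monotonicity and concavity of the integrand. First I would rewrite
$$d_p(a,b) = p(a-b) - b^{1-p}(a^p-b^p),$$
which is just algebra starting from the definition \eqref{sec1_eq_add01}. Expressing the two differences as integrals, $a-b = \int_b^a dt$ and $a^p-b^p = p\int_b^a t^{p-1}dt$, I would combine them into the clean form
$$d_p(a,b) = p\int_b^a \left[1 - \left(\tfrac{b}{t}\right)^{1-p}\right] dt,$$
valid for $a\ge b>0$ and $0<p\le 1$.

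Denote the integrand by $F(t) := 1-(b/t)^{1-p}$ on $[b,a]$. Two elementary properties of $F$ drive both bounds: (i) $F$ is nondecreasing in $t$ (because $(b/t)^{1-p}$ decreases in $t$ when $1-p\ge 0$), and (ii) $F$ is concave in $t$ for $0<p<1$, since $F''(t) = (1-p)(p-2)\,b^{1-p}\,t^{p-3}<0$. Also $F(b)=0$ and $F(a) = (a^{1-p}-b^{1-p})/a^{1-p}$.

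For the upper bound, monotonicity gives $F(t)\le F(a)$ for all $t\in[b,a]$, so
$$d_p(a,b) = p\int_b^a F(t)\,dt \;\le\; p\,F(a)(a-b) = \frac{p(a-b)(a^{1-p}-b^{1-p})}{a^{1-p}}.$$

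For the lower bound, concavity of $F$ on $[b,a]$ means the chord joining $(b,F(b))$ and $(a,F(a))$ lies below the graph, i.e.\ the trapezoidal rule underestimates the integral:
$$\int_b^a F(t)\,dt \;\ge\; \frac{F(b)+F(a)}{2}(a-b) = \frac{F(a)(a-b)}{2},$$
using $F(b)=0$. Multiplying by $p$ recovers exactly the left-hand side of the claimed inequality. The edge cases $a=b$ and $p=1$ produce equalities that are immediate to check directly. The only real obstacle is making sure the integrand is genuinely concave for every $0<p<1$, but this follows from a one-line sign check of $F''$; no sharp auxiliary inequality is needed.
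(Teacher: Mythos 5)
Your proof is correct and follows essentially the same route as the paper: after the change of variables $t\mapsto bt$ your integral representation $d_p(a,b)=p\int_b^a\bigl[1-(b/t)^{1-p}\bigr]dt$ is exactly the paper's $pb\int_1^{a/b}(1-s^{p-1})ds$, your monotonicity step for the upper bound is the paper's, and your chord-below-graph argument for the lower bound is precisely the left half of the Hermite--Hadamard inequality that the paper invokes.
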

\begin{proof}
For $p=1$ or $a=b$, we have equality. We assume $a>b$ and $0<p<1$.
It is easy to see that
\begin{equation}\label{sec2_eq00}
\int_1^x(1-t^{p-1})dt=x-1-\frac{x^p-1}{p}.
\end{equation}
We take $x=a/b$ in \eqref{sec2_eq00} and then obtain
$$
pb\int_1^{a/b}(1-t^{p-1})dt=d_p(a,b),\,\,0<p<1
$$

Next, we take the function $f:\left[1,a/b\right]\to \mathbb{R}$ defined by $f(t):=1-t^{p-1}$. By simple calculations we have
$$
\frac{df(t)}{dt}=(1-p)t^{p-2} \ge 0,\,\,\frac{d^2f(t)}{dt^2} =(1-p)(p-2)t^{p-3} \le 0.
$$
So the function $f$ is concave so that we can apply Hermite-Hadamard inequality \cite{NP2018}:
$$
\frac{1}{2}\left(f(1)+f(a/b)\right)\le \frac{1}{a/b-1}\int_1^{a/b}(1-t^{p-1})dt\le f\left(\frac{1+a/b}{2}\right).
$$
The left hand side of the inequalities above shows
$$
\frac{p(a-b)(a^{1-p}-b^{1-p})}{2a^{1-p}}\le d_p(a,b).
$$
Since the function $f(t):=1-t^{1-p}$ is increasing, we have
$$
1-t^{p-1} \le 1-x^{p-1},\,\,(t \le x,\,\,0<p<1).
$$
Integrating the above inequality by $t$ from $1$ to $x$, we get
$$
\int_1^x(1-t^{p-1})dt \le (x-1)(1-x^{p-1})
$$
which implies
$$
d_p(a,b)=bp\int_1^{a/b}(1-t^{p-1})dt \le bp(a/b-1)\left(1-(a/b)^{p-1}\right)
=\frac{p(a-b)(a^{1-p}-b^{1-p})}{a^{1-p}}.
$$
\end{proof}

\begin{theorem}\label{sec2_theorem01}
For $a,b> 0$ and $0\leq p \leq 1$, we have
\begin{equation}\label{sec2_eq01}
p\left( {1 - p} \right)\frac{{{{\left( {a - b} \right)}^2}}}{{\max \{ a,b\} }} \le d_p(a,b) + d_{1-p}(a,b) \le p\left( {1 - p} \right)\frac{{{{\left( {a - b} \right)}^2}}}{{\min \{ a,b\} }}
\end{equation}
\end{theorem}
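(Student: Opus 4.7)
The plan is to obtain Theorem~\ref{sec2_theorem01} as an immediate consequence of the Cartwright--Field inequality \eqref{sec1_eq05} applied twice. First I would write \eqref{sec1_eq05} as given, namely
\begin{equation*}
\tfrac12 p(1-p)\frac{(a-b)^2}{\max\{a,b\}}\le d_p(a,b)\le \tfrac12 p(1-p)\frac{(a-b)^2}{\min\{a,b\}}.
\end{equation*}
Then I would replace $p$ by $1-p$ in the same inequality. The crucial observation is that the prefactor $p(1-p)$ is invariant under $p\mapsto 1-p$, so I get the same two-sided bound with $d_{1-p}(a,b)$ in place of $d_p(a,b)$:
\begin{equation*}
\tfrac12 p(1-p)\frac{(a-b)^2}{\max\{a,b\}}\le d_{1-p}(a,b)\le \tfrac12 p(1-p)\frac{(a-b)^2}{\min\{a,b\}}.
\end{equation*}
Adding the two double inequalities term-by-term doubles both sides, producing exactly the bounds in \eqref{sec2_eq01}.

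There is really no obstacle: the whole argument hinges on the single symmetry $p(1-p)=(1-p)p$. As a sanity check and a conceptual reading, one can note the identity
\begin{equation*}
d_p(a,b)+d_{1-p}(a,b)=a+b-\bigl(a^pb^{1-p}+a^{1-p}b^p\bigr)=2\bigl(A(a,b)-H_p(a,b)\bigr),
\end{equation*}
so Theorem~\ref{sec2_theorem01} is equivalent to a Cartwright--Field type two-sided estimate for the gap $A(a,b)-H_p(a,b)$ between the arithmetic mean and the Heinz mean; this reformulation suggests an alternative but more laborious route via direct analysis of $H_p$, which I would avoid in favor of the symmetric-sum argument above.
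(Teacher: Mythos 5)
Your argument is correct and coincides with the paper's own proof (the paper gives two proofs, and its second one is exactly this: apply the Cartwright--Field inequality \eqref{sec1_eq05}, replace $p$ by $1-p$ using the symmetry of $p(1-p)$, and add the two double inequalities). The paper's first proof instead uses the factorization $d_p(a,b)+d_{1-p}(a,b)=(a^p-b^p)(a^{1-p}-b^{1-p})$ together with the Lagrange mean value theorem, but your route is equally valid and no less direct.
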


\begin{proof}
We give two different proofs (I) and (II).
\begin{itemize}
\item[(I)]
For  $a=b$ or $p \in \{ 0,1\} $, we obtain equality in the relation from the statement. Thus, we assume $a \ne b$  and $p \in \left( {0,1} \right)$. 
 It is easy to see that 
 $d_p(a,b) + d_{1-p}(a,b) = a + b - {a^p}{b^{1 - p}} - {a^{1 - p}}{b^p} = ({a^p} - {b^p})({a^{1 - p}} - {b^{1 - p}})$. Using the Lagrange theorem, there exists $c_1$ and $c_2$ between $a$ and $b$ such that $({a^p} - {b^p})({a^{1 - p}} - {b^{1 - p}}) = p(1 - p){(a - b)^2}c_1^{p - 1}c_2^{ - p}$.  But, we have the inequality $\frac{1}{{\max \{ a,b\} }} \le \frac{1}{{c_1^{1 - p}c_2^p}} \le \frac{1}{{\min \{ a,b\} }}$. Therefore, we deduce the inequality of the statement.
 
\item[(II)]Using the Cartwright-Field inequality, we have:
\begin{equation*}
\frac{1}{2}p\left( {1 - p} \right)\frac{{{{\left( {a - b} \right)}^2}}}{{\max \{ a,b\} }} \le d_p(a,b) \le \frac{1}{2}p\left( {1 - p} \right)\frac{{{{\left( {a - b} \right)}^2}}}{{\min \{ a,b\} }}
\end{equation*}
and if we replace $p$ by $1-p$, we deduce 
\begin{equation*}
\frac{1}{2}p\left( {1 - p} \right)\frac{{{{\left( {a - b} \right)}^2}}}{{\max \{ a,b\} }} \le d_{1-p}(a,b) \le \frac{1}{2}p\left( {1 - p} \right)\frac{{{{\left( {a - b} \right)}^2}}}{{\min \{ a,b\} }}
\end{equation*}
for $a,b> 0$ and $0\leq p \leq 1$. By summing up these inequalities, we proved the inequality of the statement
\end{itemize}
\end{proof}
 
 \begin{remark}
 \begin{itemize}
 \item[(i)] From the proof of Theorem \ref{sec2_theorem01}, we obtain 
 $A(a,b) - {H_p}(a,b) = \frac{{d_p(a,b) + d_{1-p}(a,b)}}{2}$, we deduce an estimation for the Heinz mean:
\begin{equation}\label{sec2_eq02}A(a,b) - \frac{1}{2}p\left( {1 - p} \right)\frac{{{{\left( {a - b} \right)}^2}}}{{\min \{ a,b\} }} \le {H_p}(a,b) \le A(a,b) - \frac{1}{2}p\left( {1 - p} \right)\frac{{{{\left( {a - b} \right)}^2}}}{{\max \{ a,b\} }}.
  \end{equation}
 \item[(ii)] Since $d_p(a,b) + d_{1-p}(a,b) = ({a^p} - {b^p})({a^{1 - p}} - {b^{1 - p}})$ and $d_{1-p}(a,b) \ge 0$, we have  $0 \le d_p(a,b) \le ({a^p} - {b^p})({a^{1 - p}} - {b^{1 - p}})$ which  is in fact the inequality given by Minguzzi \eqref{sec1_eq03}.
 \end{itemize}
 \end{remark}
 
 \begin{theorem}\label{sec2_theorem02}
 Let $a,b> 0$ and $0\leq p \leq 1$.
  \begin{itemize}
   \item[(i)] For $1/2 \le p \le 1,\,a \ge b$ or $0 \le p \le 1/2,\,\,a \le b$, we have $d_p(a,b) \ge d_{1-p}(a,b)$.
    \item[(ii)] For $0 \le p \le 1/2,\,a \ge b$ or $1/2 \le p \le 1,\,\,a \le b$, we have $d_p(a,b) \le d_{1-p}(a,b)$.
   \end{itemize}
 \end{theorem}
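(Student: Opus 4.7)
The plan is to reduce all four sub-assertions to a single inequality via the symmetries $d_p(a,b)=d_{1-p}(b,a)$ and $p\leftrightarrow 1-p$ recorded in the introduction. Replacing $p$ by $1-p$ turns the inequality $d_p(a,b)\ge d_{1-p}(a,b)$ into its reverse, so it converts (i) into (ii). Swapping the roles of $a$ and $b$ combined with $d_p(b,a)=d_{1-p}(a,b)$ flips $a\ge b$ into $a\le b$ while preserving the direction of the comparison. A short case-check shows the four sub-cases are mutually equivalent, so it suffices to prove (i) in the single instance $1/2\le p\le 1$ with $a\ge b>0$.

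For that instance, set
\[\phi(p):=d_p(a,b)-d_{1-p}(a,b)=(2p-1)(a-b)-a^pb^{1-p}+a^{1-p}b^p.\]
The strategy is to show that $\phi$ is concave on $[1/2,1]$ with $\phi(1/2)=\phi(1)=0$; concavity with matching zero endpoints immediately forces $\phi\ge 0$ on the interval. The endpoint values are direct substitutions. Two differentiations give
\[\phi''(p)=-\bigl(\ln(a/b)\bigr)^2\bigl(a^pb^{1-p}-a^{1-p}b^p\bigr)=-\bigl(\ln(a/b)\bigr)^2 a^p b^p\bigl(b^{1-2p}-a^{1-2p}\bigr).\]
For $p\ge 1/2$ the exponent $1-2p$ is nonpositive, and since $a\ge b>0$ the inequality $b^{1-2p}\ge a^{1-2p}$ follows, yielding $\phi''(p)\le 0$ as required.

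I expect no serious obstacle in this plan. The symmetry reduction is the one place to be careful, since the two identities $d_p(a,b)=d_{1-p}(b,a)$ and $p\leftrightarrow 1-p$ must be applied in the right combination to move between the four sub-cases without sign errors; once that bookkeeping is settled, the concavity argument is mechanical and the sign analysis of $\phi''$ boils down to observing that $x\mapsto x^{1-2p}$ is nonincreasing on $(0,\infty)$ for $p\ge 1/2$.
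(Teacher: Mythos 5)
Your proof is correct, but it takes a genuinely different route from the paper's. The paper fixes $p$ and reduces to the single-variable function $f(t)=(2p-1)(t-1)-t^p+t^{1-p}$ in $t=a/b$, showing $f''(t)=p(1-p)t^{-p-1}(t^{2p-1}-1)>0$ for $t>1$, $1/2\le p<1$, hence $f'$ increases from $f'(1)=0$, hence $f$ increases from $f(1)=0$; the other sub-cases are handled by analogous monotonicity arguments in $t$. You instead fix $a\ge b$ and exploit concavity of $\phi(p)=d_p(a,b)-d_{1-p}(a,b)$ in $p$ on $[1/2,1]$: your computation $\phi''(p)=-(\log(a/b))^2\,a^pb^p\,(b^{1-2p}-a^{1-2p})\le 0$ is correct, and since $\phi(1/2)=\phi(1)=0$, concavity forces $\phi\ge 0$ on the whole interval. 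This is arguably cleaner because both endpoints of the relevant $p$-interval vanish identically, so no monotonicity bootstrapping from a boundary derivative is needed; the paper's version, by contrast, makes the dependence on $t=a/b$ explicit, which is the homogeneity the rest of the paper repeatedly uses. Your symmetry reduction to the single case $1/2\le p\le 1$, $a\ge b$ is sound — the composition of $a\leftrightarrow b$ with $p\leftrightarrow 1-p$ maps the first sub-case of (i) to the second, and $p\leftrightarrow 1-p$ alone maps (i) to (ii) — though note that the $a\leftrightarrow b$ swap \emph{alone} reverses the sign of $\phi$ (it sends (i)-first to (ii)-second); it is only the combined substitution that preserves the direction, so your bookkeeping caveat is warranted but ultimately harmless.
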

 \begin{proof}
  For  $a=b$ or $p \in \{ 0,1\} $,  we obtain equality in the relations from the statement. Thus, we assume $a \ne b$  and $p \in \left( {0,1} \right)$. But, we have 
\begin{eqnarray*}
d_p(a,b) - d_{1-p}(a,b) &=& \left( {2p - 1} \right)\left( {a - b} \right) - {a^p}{b^{1 - p}} + {a^{1 - p}}{b^p} \\
&=& b\left( {\left( {2p - 1} \right)\left( {\frac{a}{b} - 1} \right) - {{\left( {\frac{a}{b}} \right)}^p} + {{\left( {\frac{a}{b}} \right)}^{1 - p}}} \right).
\end{eqnarray*}
  We consider the function  $f:(0,\infty ) \to \mathbb{R}$ defined by  $f(t) = (2p - 1)(t - 1) - {t^p} + {t^{1 - p}}$. We calculate the derivatives of $f$, thus we have
  \begin{eqnarray*} 
  &&\frac{{df(t)}}{{dt}} = (2p - 1) - p{t^{p - 1}} + (1 - p){t^{ - p}},\\
  &&\frac{{{d^2}f(t)}}{{d{t^2}}} = (1 - p)p{t^{p - 2}} - p(1 - p){t^{ - p - 1}} = p(1 - p){t^{ - p - 1}}\left( {{t^{2p - 1}} - 1} \right).
  \end{eqnarray*}
  For  $t>1$ and  $1/2\le p <1$, we have $\frac{{{d^2}f(t)}}{{d{t^2}}} > 0$, so,  function  $\frac{{df}}{{dt}}$ is increasing, so we obtain    $\frac{{df\left( t \right)}}{{dt}} > \frac{{df\left( 1 \right)}}{{dt}} = 0$, which implies that function $f$ is increasing, so we have
$f(t) > f(1) = 0$, which means that  $(2p - 1)(t - 1) - {t^p} + {t^{1 - p}} > 0$. For  $t = a/b > 1$, we find that  $d_p(a,b) > d_{1-p}(a,b)$.
 For  $t<1$ and  $0<p\le 1/2$, we have $\frac{{{d^2}f(t)}}{{d{t^2}}} > 0$, so, function $\frac{{df}}{{dt}}$  is increasing, so we obtain    $\frac{{df\left( t \right)}}{{dt}} < \frac{{df\left( 1 \right)}}{{dt}} = 0$, which implies that function $f$ is decreasing, so we have  $f(t) > f(1) = 0$, which means that  $(2p - 1)(t - 1) - {t^p} + {t^{1 - p}} > 0$. For  $t = a/b < 1$, we find that  $d_p(a,b) > d_{1-p}(a,b)$. In the analogous way, we show the inequality in (ii).
  \end{proof}
  
  \begin{remark}
  From (i) in Theorem \ref{sec2_theorem02} for $1/2\le p \le 1$ and $a \ge b$, we have  $d_p(a,b) \ge d_{1-p}(a,b)$, so we obtain 
\begin{equation}\label{sec2_eq03}
\frac{1}{2}p\left( {1 - p} \right)\frac{{{{\left( {a - b} \right)}^2}}}{{\max \{ a,b\} }} \le d_p(a,b),
\end{equation}
which is just left hand side of Cartwright-Field inequality:
$$\frac{1}{2}p\left( {1 - p} \right)\frac{{{{\left( {a - b} \right)}^2}}}{{\max \{ a,b\} }} \le d_p(a,b) \le \frac{1}{2}p\left( {1 - p} \right)\frac{{{{\left( {a - b} \right)}^2}}}{{\min \{ a,b\} }},\,\,(a,b>0,\,\,0\le p \le 1).$$

Therefore, it is quite natural to consider the following inequality
$$d_p(a,b) \ge \frac{1}{2}\left\{ {\frac{1}{2}p\left( {1 - p} \right)\frac{{{{\left( {a - b} \right)}^2}}}{{\max \{ a,b\} }} + \frac{1}{2}p\left( {1 - p} \right)\frac{{{{\left( {a - b} \right)}^2}}}{{\min \{ a,b\} }}} \right\} = \frac{1}{4}p\left( {1 - p} \right){\left( {a - b} \right)^2}\frac{{a + b}}{{ab}}$$
holds or not for a general case $a,b>0$ and $0\le p \le 1$. However, this inequality does not hold in general. We set the function 
$${h_p}(t) = pt + 1 - p - {t^p} - \frac{{p(1 - p)}}{4}\frac{{{{\left( {t - 1} \right)}^2}\left( {t + 1} \right)}}{t},\,\,\,\,\left( {t > 0,\,\,\,\,0 \le p \le 1} \right).$$
Then we have $h_{0.1}(0.3)\simeq -0.00434315$, $h_{0.1}(0.6)\simeq 0.000199783$ and also $h_{0.9}(1.8)\simeq 0.000352199$, $h_{0.9}(2.6)\simeq -0.00282073$.
  \end{remark}
  
  \begin{theorem}\label{sec2_theorem_5}
For $a,b\geq 1$ and $0\leq p\le 1$, we have
\begin{equation}\label{sec1_eq07_1}
\frac{1}{2}p\left( {1 - p} \right)\frac{{{{\left( {a - b} \right)}^2}}}{{\max \{ a,b\} }} \le \frac{1}{2}E_p\left(a,b\right)\le d_p(a,b),
\end{equation}
where $E_p\left(a,b\right):=\min\left\{\frac{p(a-b)(a^{1-p}-b^{1-p})}{\left(\max \{ a,b\}\right)^{1-p}},\frac{(1-p)(a-b)(a^{p}-b^{p})}{\left(\max \{ a,b\}\right)^{p}}\right\}=E_{1-p}\left(a,b\right)$.
\end{theorem}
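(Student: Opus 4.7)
The plan is to reduce both inequalities to Theorem \ref{sec2_theorem00} and to Young's inequality \eqref{sec1_eq04}. The trivial cases $a=b$ or $p\in\{0,1\}$ give equality throughout, so assume $a\ne b$ and $0<p<1$. Since $(a-b)^2/\max\{a,b\}$ is symmetric in $a,b$, $E_p(a,b)$ is symmetric in $a,b$ and satisfies $E_p=E_{1-p}$, and $d_p(a,b)=d_{1-p}(b,a)$, the case $b>a$ can be handled by replacing $(a,b,p)$ with $(b,a,1-p)$. So we may assume $a>b\ge 1$, hence $\max\{a,b\}=a$.

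For the right-hand inequality $\frac{1}{2}E_p(a,b)\le d_p(a,b)$, observe that the first argument of the $\min$ defining $E_p(a,b)$ is precisely $p(a-b)(a^{1-p}-b^{1-p})/a^{1-p}$. Theorem \ref{sec2_theorem00} delivers
\[
\frac{p(a-b)(a^{1-p}-b^{1-p})}{2a^{1-p}}\le d_p(a,b),
\]
and since $\frac{1}{2}E_p(a,b)$ is a fortiori at most this quantity, the inequality follows.

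For the left-hand inequality I must show that $p(1-p)(a-b)^2/a$ does not exceed either of the two candidates inside the $\min$. Dividing the first required bound by the positive number $p(a-b)/a$ reduces it to $(1-p)(a-b)\le a-a^p b^{1-p}$, i.e., to the Young inequality $a^p b^{1-p}\le pa+(1-p)b$ of \eqref{sec1_eq04}. Dividing the second required bound by $(1-p)(a-b)/a$ reduces it analogously to $a^{1-p}b^p\le(1-p)a+pb$, which is Young's inequality with $p$ replaced by $1-p$. Both hold for all $a,b>0$, so the result follows; in fact the hypothesis $a,b\ge 1$ is never invoked in this argument. The only nontrivial observation is that the two terms forming the $\min$ in $E_p(a,b)$ correspond exactly to the two symmetric forms of Young's inequality, which is what converts the Cartwright-Field lower bound into the sharper $\frac{1}{2}E_p(a,b)$.
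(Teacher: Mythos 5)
Your proof is correct, and its skeleton is the same as the paper's: reduce to the case $a>b$ by the symmetries of $E_p$ and $d_p$, get the upper estimate $\tfrac12 E_p(a,b)\le d_p(a,b)$ from Theorem \ref{sec2_theorem00} applied to the first branch of the minimum, and prove the lower estimate by an elementary pointwise inequality. The differences are worth recording. Where you invoke Young's inequality \eqref{sec1_eq04} to obtain $(1-p)(a-b)\le a-a^p b^{1-p}$, the paper derives the equivalent bound $a^{1-p}-b^{1-p}\ge (1-p)(a-b)a^{-p}$ from the Lagrange mean value theorem (the two coincide after multiplying by $a^p$), so that step is only cosmetically different. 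More substantively, you check that the Cartwright--Field quantity is dominated by \emph{both} entries of the minimum defining $E_p(a,b)$, whereas the paper checks one entry in each of the cases $a>b$ and $a<b$ and leaves the other to the symmetry $E_p=E_{1-p}$; your write-up is the more complete one. Finally, your observation that the hypothesis $a,b\ge 1$ is never used is accurate: neither Theorem \ref{sec2_theorem00} nor Young's inequality requires it, and the condition ``$b\ge 1$'' invoked in the paper's own proof is likewise superfluous there, since $\phi^{-p}\ge a^{-p}$ already follows from $\phi<a$. So your argument in fact establishes the statement for all $a,b>0$.
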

\begin{proof}
For $p=0$ or $p=1$ or $a=b$, we have equality. We assume $a\neq b$ and $0<p<1$.
If $b<a$, then using Theorem \ref{sec2_theorem00}, we have
\begin{equation*}
\frac{p(a-b)(a^{1-p}-b^{1-p})}{2a^{1-p}}\le d_p(a,b).
\end{equation*}
Using the Lagrange theorem, we obtain $a^{1-p}-b^{1-p}=(1-p)(a-b)\phi^{-p}$, where $b<\phi<a$. For $b\geq 1$, we deduce $a^{1-p}-b^{1-p}\geq(1-p)(a-b)a^{-p}$, which means that $\frac{1}{2}p\left( {1 - p} \right)\frac{{{{\left( {b - a} \right)}^2}}}{a}\le\frac{p(a-b)(a^{1-p}-b^{1-p})}{2a^{1-p}}$. 
If $b>a$ and we replace $p$ by $1-p$, then Theorem \ref{sec2_theorem00} implies
\begin{equation}\label{sec2_eq08_1}
\frac{(1-p)(a-b)(a^{p}-b^{p})}{2b^{p}}\le d_p(a,b).
\end{equation}
Using the Lagrange theorem, we obtain $b^{p}-a^{p}=p(b-a)\theta^{p-1}$, where $a<\theta <b$. For $a\geq 1$, we deduce $b^{p}-a^{p}\geq p(b-a)b^{p-1}$, which means that $\frac{1}{2}p\left( {1 - p} \right)\frac{{{{\left( {b - a} \right)}^2}}}{b}\le\frac{(1-p)(a-b)(a^{p}-b^{p})}{2b^{p}}$. Taking into account the above considerations, we prove the statement. 
\end{proof}

\begin{corollary}\label{sec2_theorem_6}
For $0<a,b\leq 1$ and $0\leq p\le 1$, we have
\begin{equation}\label{sec1_eq07_21}
\frac{1}{2}p\left( {1 - p} \right)\frac{{{{\left( {a - b} \right)}^2}}}{{\max \{ a,b\} }} \le \frac{ab}{2}E_p\left(\frac{1}{a},\frac{1}{b}\right)\le d_p(a,b),
\end{equation}
where $E_{\cdot}(\cdot,\cdot)$ is given in Theorem \ref{sec2_theorem_5}.
\end{corollary}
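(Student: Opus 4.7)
The plan is to reduce Corollary 2.6 to Theorem 2.5 by the standard reciprocal substitution, in exactly the same way that Theorem 2.1 was deduced from inequality \eqref{sec1_eq07}. Since the hypothesis $0<a,b\le 1$ forces $1/a,1/b\ge 1$, Theorem \ref{sec2_theorem_5} applies to the pair $(1/a,1/b)$. To get $d_p(a,b)$ (rather than $d_{1-p}(a,b)$) on the right of the final inequality, I will apply Theorem \ref{sec2_theorem_5} with parameter $1-p$ instead of $p$, which is legal since $1-p\in[0,1]$ whenever $p\in[0,1]$.

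Carrying this out, Theorem \ref{sec2_theorem_5} gives
\begin{equation*}
\frac{1}{2}(1-p)\bigl(1-(1-p)\bigr)\frac{(1/a-1/b)^2}{\max\{1/a,1/b\}} \;\le\; \frac{1}{2}E_{1-p}\!\left(\tfrac{1}{a},\tfrac{1}{b}\right) \;\le\; d_{1-p}\!\left(\tfrac{1}{a},\tfrac{1}{b}\right).
\end{equation*}
I then multiply through by $ab>0$ and invoke the two identities already on the table: the reciprocal relation $d_{1-p}(1/a,1/b)=\frac{1}{ab}\,d_p(a,b)$ (from the introduction) converts the right-hand side into exactly $d_p(a,b)$, while the symmetry $E_{1-p}(1/a,1/b)=E_p(1/a,1/b)$ (stated in Theorem \ref{sec2_theorem_5}) converts the middle quantity into $\frac{ab}{2}E_p(1/a,1/b)$, which is the middle expression required in the corollary.

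It remains to check that the leftmost term collapses to $\frac{1}{2}p(1-p)\frac{(a-b)^2}{\max\{a,b\}}$. The routine simplification
\begin{equation*}
ab\cdot\frac{(1/a-1/b)^2}{\max\{1/a,1/b\}} \;=\; ab\cdot\frac{(a-b)^2/(ab)^2}{1/\min\{a,b\}} \;=\; \frac{(a-b)^2\min\{a,b\}}{ab}
\end{equation*}
together with the elementary identity $ab=\min\{a,b\}\cdot\max\{a,b\}$ yields $\frac{(a-b)^2}{\max\{a,b\}}$, completing the chain. The factor $p(1-p)$ survives untouched because $(1-p)(1-(1-p))=p(1-p)$.

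I do not expect any real obstacle. The only place one could slip is in the choice of parameter: applying Theorem \ref{sec2_theorem_5} naively with parameter $p$ produces $d_{1-p}(a,b)$ rather than $d_p(a,b)$ after the reciprocal identity is invoked, so the swap $p\mapsto 1-p$ (made invisible in the middle term by the symmetry $E_p=E_{1-p}$) must be carried out at the outset. Once that is noted, the argument reduces to the two-line algebraic simplification above.
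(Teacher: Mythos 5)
Your proof is correct and follows essentially the same route as the paper: apply Theorem \ref{sec2_theorem_5} to the pair $(1/a,1/b)$, invoke the reciprocal identity for $d_p$ and the symmetry $E_p=E_{1-p}$, and multiply through by $ab$. In fact you are slightly more careful than the paper's own proof, which writes $d_p(1/a,1/b)=\frac{1}{ab}d_p(a,b)$ where the identity from the introduction actually gives $\frac{1}{ab}d_{1-p}(a,b)$; your explicit substitution $p\mapsto 1-p$ at the outset cleanly repairs that slip.
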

\begin{proof}
For $p=0$ or $p=1$ or $a=b$, we have the equality. We assume $a\neq b$ and $0<p<1$.
If in inequality \eqref{sec1_eq07_1}, we replace $a,b\leq 1$ by $\frac{1}{a},\frac{1}{b}\geq 1$, we deduce
\begin{equation*}
\frac{1}{2}p\left( {1 - p} \right)\frac{{{{\left( {a - b} \right)}^2}}}{{ab\max \{ a,b\} }} \le \frac{1}{2}E_p\left(\frac{1}{a},\frac{1}{b}\right)\le d_p\left(\frac{1}{a},\frac{1}{b}\right)=\frac{1}{ab}d_p(a,b).
\end{equation*}
 Consequently, we prove the inequalities of the statement. 
\end{proof}

 \begin{theorem}\label{sec2_theorem_7}
For $a,b> 0$ and $0\leq p\le 1$, we have
\begin{equation}\label{sec1_eq07_11}
 d_p(a,b)\leq (1-p)\frac{{{{\left( {a - b} \right)}^2}}}{b}.
\end{equation}
\end{theorem}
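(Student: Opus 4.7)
The plan is to exploit the homogeneity relation $d_p(a,b) = b\, d_p(a/b,1)$ recorded in the introduction so as to reduce the two-variable inequality to a one-variable calculus problem. Setting $t := a/b > 0$, the right-hand side becomes $(1-p)b(t-1)^2$, so after dividing through by $b$ the claim is equivalent to
$$\psi(t) := (1-p)(t-1)^2 - \bigl(pt + 1 - p - t^p\bigr) \ge 0 \qquad (t > 0,\; 0 \le p \le 1).$$
The boundary cases $p \in \{0,1\}$ and $a = b$ yield equality or a trivial inequality and can be handled separately at the start.

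First I would expand $(1-p)(t-1)^2$ and collect terms, which gives the clean form
$$\psi(t) = (1-p)t^2 - (2-p)t + t^p.$$
Since $t > 0$, it is enough to prove $g(t) := \psi(t)/t \ge 0$, where
$$g(t) = (1-p)t - (2-p) + t^{p-1}.$$
A direct check gives $g(1) = (1-p) - (2-p) + 1 = 0$, and differentiation yields $g'(t) = (1-p)\bigl(1 - t^{p-2}\bigr)$. Since $p - 2 < 0$ for $p \in (0,1)$, the factor $t^{p-2}$ is strictly decreasing, equals $1$ at $t = 1$, exceeds $1$ for $t < 1$, and is less than $1$ for $t > 1$. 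Hence $g'(t) < 0$ on $(0,1)$ and $g'(t) > 0$ on $(1,\infty)$, so $t = 1$ is the global minimum of $g$ and $g(t) \ge g(1) = 0$. Multiplying by $t$ and then by $b$ delivers the desired bound.

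The only subtlety worth flagging is that $\psi$ itself is \emph{not} globally convex: one computes $\psi''(t) = 2(1-p) - p(1-p)t^{p-2}$, which becomes negative for small $t$. A naive attempt to conclude from $\psi(1) = \psi'(1) = 0$ together with convexity therefore fails. Dividing by $t$ first is the decisive move, since it replaces the problematic term $t^p$ by $t^{p-1}$ and makes the monotonicity structure of the derivative transparent. Beyond this observation I do not expect any real obstacle.
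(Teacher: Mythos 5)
Your proof is correct and is essentially the same as the paper's: your auxiliary function $g(t)=(1-p)t-(2-p)+t^{p-1}$ is exactly the negative of the paper's function $f(t)=1-t^{p-1}-(1-p)(t-1)$, and both arguments settle the sign by the identical monotonicity analysis of $(1-p)(1-t^{p-2})$ on either side of $t=1$ before multiplying back by $t$ and $b$. The observation that one must divide by $t$ (rather than argue convexity of $\psi$ directly) is the same move the paper makes by starting from $1-t^{p-1}\le(1-p)(t-1)$ and only then multiplying by $t$.
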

\begin{proof}
For $p=0$ or $p=1$ or $a=b$, we have the equality in the relation from the statement. We assume $a\neq b$ and $0<p<1$. We consider function $f:(0, \infty)\to\mathbb{R}$ defined by $f(t)=1-t^{p-1}-(1-p)(t-1)$, $p\in [0,1]$. For $t\in(0,1]$, we have $\frac{df(t)}{dt}=(1-p)(t^{p-2}-1)\geq 0$, which implies that $f$ is increasing, so, we deduce $f(t)\leq f(1)=0$. For $t\in[1,\infty)$, we have $\frac{df(t)}{dt}\leq 0$, which implies that $f$ is decreasing, so, we obtain $f(t)\leq f(1)=0.$ Therefore, we find the following inequality
$$1-t^{p-1}\leq (1-p)(t-1).$$
Multiplying the above inequality by $t>0$, we have 
$$t-t^{p}\leq (1-p)(t^2-t),$$ 
which is equivalent to the inequality $$pt+(1-p)-t^{p}\leq (1-p)(t-1)^2,$$ for all $t>0$ and $p\in [0,1]$.
Therefore, if we take $t=\frac{a}{b}$ in the above inequality and after some calculations, we deduce the inequality of the statement. 
\end{proof}

\begin{corollary}\label{sec2_theorem_8}
For $a,b> 0$ and $0\leq p\le 1$, we have
\begin{equation}\label{sec1_eq07_22}
d_p(a,b)+d_{1-p}(a,b)\leq (1-p)\frac{{{{\left( {a - b} \right)}^2(a+b)}}}{ab}.
\end{equation}
\end{corollary}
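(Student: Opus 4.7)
The plan is to apply Theorem \ref{sec2_theorem_7} twice: once directly, and once with the roles of $a$ and $b$ interchanged, then combine the two bounds additively.

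First, by Theorem \ref{sec2_theorem_7} applied to the pair $(a,b)$, we have
$$d_p(a,b) \le (1-p)\frac{(a-b)^2}{b}.$$
Next, I would use the symmetry identity $d_p(a,b) = d_{1-p}(b,a)$ recorded in the introduction, which (after swapping the roles of $p$ and $1-p$) yields $d_{1-p}(a,b) = d_p(b,a)$. Applying Theorem \ref{sec2_theorem_7} to the pair $(b,a)$ therefore gives
$$d_{1-p}(a,b) = d_p(b,a) \le (1-p)\frac{(b-a)^2}{a} = (1-p)\frac{(a-b)^2}{a}.$$

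Summing the two displayed inequalities and factoring out $(1-p)(a-b)^2$ yields
$$d_p(a,b)+d_{1-p}(a,b) \le (1-p)(a-b)^2\left(\frac{1}{a}+\frac{1}{b}\right) = (1-p)\frac{(a-b)^2(a+b)}{ab},$$
which is the desired inequality. There is no real obstacle here: the whole argument is just Theorem \ref{sec2_theorem_7} combined with the elementary symmetry $d_{1-p}(a,b)=d_p(b,a)$, so the only point deserving care is verifying that this symmetry is indeed available from the properties of $d_p$ listed after \eqref{sec1_eq_add01}.
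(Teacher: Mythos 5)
Your argument is correct and is essentially identical to the paper's own proof: apply Theorem \ref{sec2_theorem_7} to $(a,b)$ and to $(b,a)$, use the identity $d_p(b,a)=d_{1-p}(a,b)$ (which does follow from the listed property $d_p(a,b)=d_{1-p}(b,a)$), and add the two bounds. Nothing further is needed.
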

\begin{proof}
For $p=0$ or $p=1$ or $a=b$, we have the equality. We assume $a\neq b$ and $0<p<1$.
If in inequality \eqref{sec1_eq07_11}, we exchange $a$ with $b$, we deduce
$$d_p(b,a)\leq (1-p)\frac{{{{\left( {a - b} \right)}^2}}}{a}.$$
But $d_p(b,a)=d_{1-p}(a,b)$, so, we have
$$d_p(a,b)+d_{1-p}(a,b)\leq (1-p)\frac{{{{\left( {a - b} \right)}^2}}}{b}+(1-p)\frac{{{{\left( {a - b} \right)}^2}}}{a}=(1-p)\frac{{{{\left( {a - b} \right)}^2(a+b)}}}{ab}.$$
 Consequently, we prove the inequality of the statement. 
\end{proof}

\section{Applications to some divergences}
The Tsallis divergence (e.g.,\cite{FYK2004,Tsa1998}) is defined for two probability distributions
${\bf p}:=\{p_1,\cdots,p_n\}$ and ${\bf r}:=\{r_1,\cdots,r_n\}$ with $p_j>0$ and $r_j>0$ for all $j=1,\cdots,n$ as
$$
D_q^{T}({\bf p}|{\bf r}) :=\sum_{j=1}^n\frac{p_j-p_j^qr_j^{1-q}}{1-q},\,\,(q>0,\,\,q\ne 1 ).
$$
The R\'{e}nyi divergence (e.g.,\cite{AD1975}) also denoted by
$$
D_q^{R}({\bf p}|{\bf r}) :=\frac{1}{q-1}\log\left(\sum_{j=1}^np_j^{q}r_j^{1-q}\right).
$$
We see in (e.g. \cite{FM2019}) that 
\begin{equation}\label{sec3_eq_add01}
D_q^{R}({\bf p}|{\bf r})=\frac{1}{q-1}\log\left(1+(q-1)D_q^{T}({\bf p}|{\bf r}) \right).
\end{equation}
It is also known that
$$
\lim_{q\to 1}D_q^{T}({\bf p}|{\bf r}) =\lim_{q\to 1}D_q^{R}({\bf p}|{\bf r}) 
= \sum_{j=1}^np_j\log\frac{p_j}{r_j}=:D({\bf p}|{\bf r}),
$$
where $D({\bf p}|{\bf r})$ is the standard divergence (KL information, reltative entropy).
The Jeffreys divergence (see \cite{FM2019}, \cite{FMi2012}) is defined by $J_1({\bf p}|{\bf r}):=D({\bf p}|{\bf r})+D({\bf r}|{\bf p})$
and the Jensen-Shannon divergence \cite{Lin1991,Sib1969} is defined by
 $$JS_1({\bf p}|{\bf r}):=\frac{1}{2}D\left({\bf p}|{\frac{{\bf p+r}}{2}}\right)+\frac{1}{2}D\left({\bf r}|{\frac{{\bf p+r}}{2}}\right).$$
In \cite{MM2013}, the Jeffreys and the Jensen-Shannon divergence are extended to biparametric forms. In \cite{FMi2012}, Furuichi and Mitroi generalizes these divergences to the 
Jeffreys-Tsallis divergence, which is given by $J_q({\bf p}|{\bf r}):=D_q^T({\bf p}|{\bf r})+D_q^T({\bf r}|{\bf p})$
and to the Jensen-Shannon-Tsallis divergence, which is defined as
$$JS_q\left({\bf p}|{\bf r}\right):=\frac{1}{2}D_q^T\left({\bf p}|{\frac{{\bf p+r}}{2}}\right)+\frac{1}{2}D_q^T\left({\bf r}|{\frac{{\bf p+r}}{2}}\right).$$
Several properties of divergences can be extended in the operator theory \cite{MFM}.

For the Tsallis divergence, we have the following relations.
\begin{theorem}\label{sec3_theorem01}
For two probability distributions
${\bf p}:=\{p_1,\cdots,p_n\}$ and ${\bf r}:=\{r_1,\cdots,r_n\}$ with $p_j>0$ and $r_j>0$ for all $j=1,\cdots,n$, we have
\begin{equation}\label{sec3_eq01}
q \sum_{j=1}^n\frac{(p_j-r_j)^2}{\max\{p_j,r_j\}}\le J_q({\bf p}|{\bf r})\le q \sum_{j=1}^n\frac{(p_j-r_j)^2}{\min\{p_j,r_j\}},\,\,(0<q<1 ).
\end{equation} 
\end{theorem}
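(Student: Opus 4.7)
The plan is to rewrite the Jeffreys--Tsallis divergence as a sum, indexed by $j$, of the quantities $d_q(p_j,r_j)+d_{1-q}(p_j,r_j)$ normalized by $1-q$, and then apply Theorem \ref{sec2_theorem01} termwise.

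Unpacking the definitions gives
\begin{equation*}
J_q(\mathbf{p}|\mathbf{r})=\sum_{j=1}^{n}\frac{p_j-p_j^{q}r_j^{1-q}}{1-q}+\sum_{j=1}^{n}\frac{r_j-r_j^{q}p_j^{1-q}}{1-q}=\sum_{j=1}^{n}\frac{p_j+r_j-p_j^{q}r_j^{1-q}-p_j^{1-q}r_j^{q}}{1-q}.
\end{equation*}
Recognizing that $d_q(p_j,r_j)=qp_j+(1-q)r_j-p_j^{q}r_j^{1-q}$ and $d_{1-q}(p_j,r_j)=(1-q)p_j+qr_j-p_j^{1-q}r_j^{q}$, the numerator in the $j$th term equals exactly $d_q(p_j,r_j)+d_{1-q}(p_j,r_j)$. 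Hence
\begin{equation*}
J_q(\mathbf{p}|\mathbf{r})=\frac{1}{1-q}\sum_{j=1}^{n}\bigl(d_q(p_j,r_j)+d_{1-q}(p_j,r_j)\bigr).
\end{equation*}

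Now I would invoke Theorem \ref{sec2_theorem01} with $a=p_j$, $b=r_j$, and $p=q$, giving
\begin{equation*}
q(1-q)\frac{(p_j-r_j)^2}{\max\{p_j,r_j\}}\le d_q(p_j,r_j)+d_{1-q}(p_j,r_j)\le q(1-q)\frac{(p_j-r_j)^2}{\min\{p_j,r_j\}}
\end{equation*}
for each $j$. Since $0<q<1$, we have $1-q>0$, so dividing through by $1-q$ and summing over $j$ yields exactly the double inequality \eqref{sec3_eq01}.

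There is no serious obstacle here: the only step that requires any care is the algebraic identification of the integrand of $J_q$ with $d_q(p_j,r_j)+d_{1-q}(p_j,r_j)$, which works cleanly because the coefficients $q$ and $1-q$ of $p_j$ and $r_j$ sum to $1$ in each pair. The assumption $0<q<1$ is used precisely to ensure $1-q>0$, so that dividing the Cartwright--Field-type bounds by $1-q$ preserves the direction of the inequalities.
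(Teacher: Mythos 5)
Your proof is correct and is essentially identical to the paper's: both rewrite $J_q(\mathbf{p}|\mathbf{r})$ as $\frac{1}{1-q}\sum_{j=1}^n\bigl(d_q(p_j,r_j)+d_{1-q}(p_j,r_j)\bigr)$ and then apply Theorem \ref{sec2_theorem01} termwise, using $1-q>0$ to divide without reversing the inequalities. No gaps.
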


\begin{proof}
From the definition of the Tsallis divergence, we deduce the equality:
$$
J_q({\bf p}|{\bf r})=\sum_{j=1}^n\frac{p_j+r_j-p_j^qr_j^{1-q}-p_j^{1-q}r_j^q}{1-q}=\frac{1}{1-q}\sum_{j=1}^n\left\{d_q(p_j,r_j)+d_{1-q}(p_j,r_j)\right\},
$$
where $d_{\cdot}(\cdot,\cdot)$ is defined in \eqref{sec1_eq_add01}.
Applying Theorem \ref{sec2_theorem01}, we obtain
$$
q(1-q)\sum_{j=1}^n\frac{(p_j-r_j)^2}{\max\{p_j,r_j\}}\le \sum_{j=1}^n\left\{d_q(p_j,r_j)+d(p_j,r_j)\right\}\le q(1-q)\sum_{j=1}^n\frac{(p_j-r_j)^2}{\min\{p_j,r_j\}}
$$
and combining with the above equality, we deduce the inequalities \eqref{sec3_eq01}.
\end{proof}

\begin{remark}
\begin{itemize}
\item[(i)] In the limit of $q\to 1$ in \eqref{sec3_eq01}, we then obtain
$$
\sum_{j=1}^n\frac{(p_j-r_j)^2}{\max\{p_j,r_j\}}\le J_1({\bf p}|{\bf r})\le \sum_{j=1}^n\frac{(p_j-r_j)^2}{\min\{p_j,r_j\}}
$$
for the standard divergence.
\item[(ii)]From \eqref{sec3_eq_add01}, we have
\begin{eqnarray*}
2+(q-1)\left\{D_q^T({\bf p}|{\bf r})+D_q^T({\bf r}|{\bf p})\right\}&=&
\exp\left((q-1)D_q^R({\bf p}|{\bf r})\right)+\exp\left((q-1)D_q^R({\bf r}|{\bf p})\right)\\
&\ge& 2+(q-1)\left\{D_q^R({\bf p}|{\bf r})+D_q^R({\bf r}|{\bf p})\right\},
\end{eqnarray*}
where we used the inequality $e^x \ge x+1$ for all $x\in\mathbb{R}$.
Thus, we deduce the inequalities:
\begin{equation}\label{remark3.2_ii_ineq1}
D_q^T({\bf p}|{\bf r})+D_q^T({\bf r}|{\bf p}) \le D_q^R({\bf p}|{\bf r})+D_q^R({\bf r}|{\bf p}),\,\,(0<q<1)
\end{equation}
and
$$
D_q^T({\bf p}|{\bf r})+D_q^T({\bf r}|{\bf p}) \ge D_q^R({\bf p}|{\bf r})+D_q^R({\bf r}|{\bf p}),\,\,(q>1).
$$
Combining \eqref{remark3.2_ii_ineq1} with Theorem \ref{sec3_theorem01}, we therefore have the following result for the  R\'{e}nyi divergence 
$$
q\sum_{j=1}^n\frac{(p_j-r_j)^2}{\max\{p_j,r_j\}}\le D_q^R({\bf p}|{\bf r})+D_q^R({\bf r}|{\bf p}),\,\,(0<q<1).
$$
\end{itemize}
\end{remark}
We give the relation between the Jeffreys-Tsallis divergence and the  Jensen-Shannon-Tsallis divergence.
\begin{theorem}\label{sec3_theorem02}
For two probability distributions
${\bf p}:=\{p_1,\cdots,p_n\}$ and ${\bf r}:=\{r_1,\cdots,r_n\}$ with $p_j>0$ and $r_j>0$ for all $j=1,\cdots,n$, we have
\begin{equation}\label{sec3_eq02}
JS_q\left({\bf p}|{\bf r}\right)\leq \frac{1}{4}J_q\left({\bf p}|{\bf r}\right),
\end{equation} 
where $q \ge 0$ with $q \ne 1$. 
\end{theorem}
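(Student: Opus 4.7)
The plan is to reduce the claimed inequality to a pointwise inequality indexed by $j$, and then to a single application of Jensen's inequality for the power function $x\mapsto x^{1-q}$.

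First I would unpack the definitions. Setting $m_j:=(p_j+r_j)/2$, direct computation gives
\[
(1-q)J_q({\bf p}|{\bf r})=\sum_{j=1}^n\left(p_j+r_j-p_j^qr_j^{1-q}-p_j^{1-q}r_j^q\right),
\]
\[
2(1-q)JS_q({\bf p}|{\bf r})=\sum_{j=1}^n\left(p_j+r_j-(p_j^q+r_j^q)m_j^{1-q}\right).
\]
Multiplying the desired estimate $JS_q\le\frac{1}{4}J_q$ by $4(1-q)$, which is positive when $0<q<1$ and negative when $q>1$, the claim becomes
\[
\sum_{j=1}^n 2(p_j^q+r_j^q)m_j^{1-q}\ \ge\ \sum_{j=1}^n\bigl((p_j+r_j)+p_j^qr_j^{1-q}+p_j^{1-q}r_j^q\bigr)
\]
when $0<q<1$, and the reverse inequality when $q>1$.

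Next I would use the algebraic identity
\[
(p_j^q+r_j^q)(p_j^{1-q}+r_j^{1-q})=p_j+r_j+p_j^qr_j^{1-q}+p_j^{1-q}r_j^q,
\]
which rewrites the right-hand sum above as $\sum_j(p_j^q+r_j^q)(p_j^{1-q}+r_j^{1-q})$. Dividing each summand by the positive factor $p_j^q+r_j^q$, it suffices to prove the pointwise bound
\[
2m_j^{1-q}\ \ge\ p_j^{1-q}+r_j^{1-q}\qquad(0<q<1),
\]
with the reverse inequality for $q>1$. Equivalently, this is $\left(\tfrac{p_j+r_j}{2}\right)^{1-q}\gtreqless\tfrac{1}{2}(p_j^{1-q}+r_j^{1-q})$, which is Jensen's inequality for $\varphi(x)=x^{1-q}$: for $0<q<1$ the exponent $1-q\in(0,1)$ makes $\varphi$ concave so the inequality goes one way, while for $q>1$ the exponent is negative and $\varphi$ is convex on $(0,\infty)$, flipping the inequality in just the way that compensates for the sign of $1-q$. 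The boundary case $q=0$ gives $0$ on both sides since $\mathbf p,\mathbf r$ are probability distributions.

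The only nontrivial step is spotting the factorization $(p_j^q+r_j^q)(p_j^{1-q}+r_j^{1-q})=p_j+r_j+p_j^qr_j^{1-q}+p_j^{1-q}r_j^q$; once that is in hand, the problem collapses to concavity/convexity of a single power function. The most delicate bookkeeping is keeping track of the sign of $1-q$ so that the Jensen inequality and the multiplication by $1-q$ combine to give the single conclusion $JS_q\le\frac14 J_q$ in both regimes $0\le q<1$ and $q>1$.
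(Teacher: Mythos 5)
Your proof is correct and rests on exactly the same key step as the paper's: Jensen's inequality for $\varphi(x)=x^{1-q}$ (concave for $0<q<1$, convex for $q>1$), combined with careful tracking of the sign of $1-q$. The factorization $(p_j^q+r_j^q)(p_j^{1-q}+r_j^{1-q})=p_j+r_j+p_j^qr_j^{1-q}+p_j^{1-q}r_j^q$ is just a repackaging of the paper's step of multiplying the Jensen inequality by $p_j^q$ and by $r_j^q$ separately and adding, so the two arguments are essentially identical.
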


\begin{proof}
We consider the function $g:(0,\infty)\to\mathbb{R}$ defined by $g(t)=t^{1-q}$, which is concave for $q\in[0,1)$. Therefore, we have $\left(\frac{p_j+r_j}{2}\right)^{1-q}\geq \frac{p_j^{1-q}+r_j^{1-q}}{2}$, which implies the following inequalities $$p_j-p_j^q\left(\frac{p_j+r_j}{2}\right)^{1-q}\leq \frac{p_j-p_j^{q}r_j^{1-q}}{2}, r_j-r_j^q\left(\frac{p_j+r_j}{2}\right)^{1-q}\leq \frac{r_j-r_j^{q}p_j^{1-q}}{2},$$
From the definition of the Tsallis divergence, we deduce the inequality:
$$
D_q^T\left({\bf p}|{\frac{{\bf p+r}}{2}}\right)+D_q^T\left({\bf r}|{\frac{{\bf p+r}}{2}}\right)\leq \frac{1}{2}\left(D_q^T({\bf p}|{\bf r})+D_q^T({\bf r}|{\bf p})\right),
$$
which is equivalent to the relation of the statement. For the case of $q >1$, the function $g(t)=t^{1-q}$ is convex in $t>0$. Similarly, we have the statement, taking into account that $1-q<0$.
\end{proof}
\begin{remark}
In the limit of $q\to 1$ in \eqref{sec3_eq02}, we then obtain
$$
JS_1\left({\bf p}|{\bf r}\right)\leq \frac{1}{4}J_1\left({\bf p}|{\bf r}\right).
$$
\end{remark}
We give the bounds on the Jeffreys-Tsallis divergence by using the refined Young inequality given in Theorem \ref{sec2_theorem_1}. In \cite{LPP}, we found the {\it Battacharyya coefficient} defined as: $$B({\bf p}|{\bf r}):=\sum_{j=1}^{n}{\sqrt {p_jr_j}},$$ which is a measure of the amount of overlapping between two distributions. This can be expressed in terms of the Hellinger distance between the probability distributions
${\bf p}:=\{p_1,\cdots,p_n\}$ and ${\bf r}:=\{r_1,\cdots,r_n\}$, which is given by $$B({\bf p}|{\bf r})=1-h^2({\bf p}|{\bf r}),$$ where the {\it Hellinger distance} (\cite{LPP}, \cite{vEH}) is a metric distance and defined by $$h({\bf p}|{\bf r}):=\frac{1}{\sqrt{2}}\sqrt{\sum_{j=1}^{n}(\sqrt{p_j}-\sqrt{r_j})^2}.$$
\begin{theorem}\label{sec3_theorem03}
For two probability distributions
${\bf p}:=\{p_1,\cdots,p_n\}$ and ${\bf r}:=\{r_1,\cdots,r_n\}$ with $p_j>0$ and $r_j>0$ for all $j=1,\cdots,n$, and $0\leq q<1$, we have
\begin{eqnarray}
&&\frac{4r}{1-q}h^2({\bf p}|{\bf r}) + \frac{2A\left( q \right)}{1-q}\sum_{j=1}^{n}p_jr_j\cdot{\log ^2}\left( {\frac{p_j}{r_j}} \right) \le J_q({\bf p}|{\bf r})\nonumber \\
&& \le \frac{4\left( {1 - r} \right)}{1-q}h^2({\bf p}|{\bf r}) + \frac{2B\left( q \right)}{1-q}\sum_{j=1}^{n}p_jr_j\cdot{\log ^2}\left( {\frac{p_j}{r_j}} \right)\label{sec3_eq03}.
\end{eqnarray}
where $r = \min \left\{ {q,1 - q} \right\}$ and
$A\left( q \right) = \frac{{q\left( {1 - q} \right)}}{2} - \frac{r}{4},B\left( q \right) = \frac{{q\left( {1 - q} \right)}}{2} - \frac{{1 - r}}{4}$.
\end{theorem}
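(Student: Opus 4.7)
The plan is to mimic the strategy used in the proof of Theorem \ref{sec3_theorem01}, replacing the Cartwright--Field-based estimate by the sharper double inequality of Theorem \ref{sec2_theorem_1}. The starting point is the identity that was already derived in the proof of Theorem \ref{sec3_theorem01}:
\begin{equation*}
J_q({\bf p}|{\bf r})=\frac{1}{1-q}\sum_{j=1}^n\bigl\{d_q(p_j,r_j)+d_{1-q}(p_j,r_j)\bigr\},
\end{equation*}
which converts the problem into estimating the symmetrised gap $d_q(p_j,r_j)+d_{1-q}(p_j,r_j)$ termwise.

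Next I would observe that since ${\bf p}$ and ${\bf r}$ are probability distributions, one has $0<p_j,r_j\le 1$ for every $j$, which is exactly the hypothesis required by Theorem \ref{sec2_theorem_1}. Applying that theorem with parameter $q$ to each pair $(p_j,r_j)$ gives
\begin{equation*}
r\bigl(\sqrt{p_j}-\sqrt{r_j}\bigr)^2+A(q)\,p_jr_j\log^2\!\left(\tfrac{p_j}{r_j}\right)\le d_q(p_j,r_j)\le (1-r)\bigl(\sqrt{p_j}-\sqrt{r_j}\bigr)^2+B(q)\,p_jr_j\log^2\!\left(\tfrac{p_j}{r_j}\right),
\end{equation*}
and applying it with the parameter $1-q$ would produce the analogous bounds for $d_{1-q}(p_j,r_j)$. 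The key symmetry observations making everything collapse cleanly are that $\min\{q,1-q\}=\min\{1-q,q\}=r$, $A(1-q)=A(q)$, $B(1-q)=B(q)$, and $\log^2(r_j/p_j)=\log^2(p_j/r_j)$. Thus adding the two sets of bounds simply doubles the constants on both sides.

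Finally, I would sum over $j$ and divide by $1-q$, noting that $1-q>0$ under the assumption $0\le q<1$ so the inequality direction is preserved. Using the definition of the Hellinger distance to rewrite $\sum_{j=1}^n(\sqrt{p_j}-\sqrt{r_j})^2=2h^2({\bf p}|{\bf r})$ then turns the coefficient $2r$ into $4r/(1-q)$ and $2(1-r)$ into $4(1-r)/(1-q)$, while the log-squared terms acquire the factors $2A(q)/(1-q)$ and $2B(q)/(1-q)$, exactly matching \eqref{sec3_eq03}. I do not anticipate any real obstacle here: the only mildly delicate point is keeping track of the doubling from the two copies (the $d_q$ and $d_{1-q}$ contributions) and of the division by the positive factor $1-q$; the rest is bookkeeping and invocation of the already-established Theorem \ref{sec2_theorem_1}.
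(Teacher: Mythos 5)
Your proposal is correct and follows essentially the same route as the paper: both start from the identity $J_q({\bf p}|{\bf r})=\frac{1}{1-q}\sum_{j}\{d_q(p_j,r_j)+d_{1-q}(p_j,r_j)\}$, apply Theorem \ref{sec2_theorem_1} termwise (valid since $0<p_j,r_j\le 1$) with parameters $q$ and $1-q$, use the symmetries $A(q)=A(1-q)$, $B(q)=B(1-q)$, $\min\{q,1-q\}=\min\{1-q,q\}$ to double the constants, and then sum over $j$ and divide by $1-q>0$, identifying $\sum_j(\sqrt{p_j}-\sqrt{r_j})^2=2h^2({\bf p}|{\bf r})$.
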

\begin{proof}
For $q=0$, we obtain the equality. Now, we consider $0<q<1$.
Using Theorem \ref{sec2_theorem_1} for $a=p_j<1$ and $b=r_j<1$, $j\in\{1,2,...,n\}$, we deduce
$$
r{\left( {\sqrt p_j  - \sqrt r_j } \right)^2} + A\left( q \right)p_jr_j\cdot{\log ^2}\left( {\frac{p_j}{r_j}} \right) \le d_q(p_j,r_j) \le \left( {1 - r} \right){\left( {\sqrt p_j  - \sqrt r_j } \right)^2} + B\left( q \right)p_jr_j\cdot{\log ^2}\left( {\frac{p_j}{r_j}} \right),
$$
where $r = \min \left\{ {q,1 - q} \right\}$. If we replace $q$ by $1-q$ and taking into account that $A\left( q \right)=A\left( 1-q \right)$ and $B\left( q \right)=B\left( 1-q \right)$, then we have
$$
2r{\left( {\sqrt p_j  - \sqrt r_j } \right)^2} + 2A\left( q \right)p_jr_j\cdot{\log ^2}\left( {\frac{p_j}{r_j}} \right) \le d_q(p_j,r_j)+d_{1-q}(p_j,r_j)  
$$
$$
\le 2\left( {1 - r} \right){\left( {\sqrt p_j  - \sqrt r_j } \right)^2} + 2B\left( q \right)p_jr_j\cdot{\log ^2}\left( {\frac{p_j}{r_j}} \right).
$$
Taking the sum on $j=1,2,\cdots,n$, we find the inequalities
$$
2r\sum_{j=1}^{n}{\left( {\sqrt p_j  - \sqrt r_j } \right)^2} + 2A\left( q \right)\sum_{j=1}^{n}p_jr_j\cdot{\log ^2}\left( {\frac{p_j}{r_j}} \right) \le \sum_{j=1}^{n}\left(d_q(p_j,r_j)+d_{1-q}(p_j,r_j)\right)  
$$
$$
=(1-q)\left(D_q^T({\bf p}|{\bf r})+D_q^T({\bf r}|{\bf p})\right)\le 2\left( {1 - r} \right)\sum_{j=1}^{n}{\left( {\sqrt p_j  - \sqrt r_j } \right)^2} + 2B\left( q \right)\sum_{j=1}^{n}p_jr_j\cdot{\log ^2}\left( {\frac{p_j}{r_j}} \right),
$$
which is equivalent to the inequalities in the statement.
\end{proof}
\begin{remark}
In the limit of $q\to 1$ in \eqref{sec3_eq03}, we then obtain
$$
4h^2({\bf p}|{\bf r})+\frac{1}{2}\sum_{j=1}^{n}p_jr_j\cdot{\log ^2}\left( {\frac{p_j}{r_j}} \right) \le J_1({\bf p}|{\bf r}),$$
since $\lim\limits_{q\to 1}\dfrac{r}{1-q}=1$,  $\lim\limits_{q\to 1}\dfrac{A(q)}{1-q}=\dfrac{1}{4}$ and $\lim\limits_{q\to 1}\dfrac{1-r}{1-q}=\infty$.
\end{remark}
We give the further bounds on the Jeffreys-Tsallis divergence by the use of Theorem \ref{sec2_theorem_5} and Corollary \ref{sec2_theorem_8}.
\begin{theorem}\label{sec3_theorem04}
For two probability distributions
${\bf p}:=\{p_1,\cdots,p_n\}$ and ${\bf r}:=\{r_1,\cdots,r_n\}$ with $p_j>0$ and $r_j>0$ for all $j=1,\cdots,n$, and $0\leq q<1$, we have
\begin{equation}\label{sec3_eq04}
q\sum_{j=1}^{n}\frac{{{{\left( {p_j - r_j} \right)}^2}}}{{\max \{ p_j,r_j\} }} \le \frac{1}{1-q}\sum_{j=1}^{n}p_jr_jE_q\left(\frac{1}{p_j},\frac{1}{r_j}\right)\le J_q({\bf p}|{\bf r})\leq \sum_{j=1}^{n}\frac{{{{\left( {p_j - r_j} \right)}^2(p_j+r_j)}}}{p_jr_j},
\end{equation}
where $E_{\cdot}(\cdot,\cdot)$ is given in Theorem \ref{sec2_theorem_5}.
\end{theorem}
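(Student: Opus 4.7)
The plan is to express the Jeffreys--Tsallis divergence in terms of the quantities $d_q(p_j,r_j)+d_{1-q}(p_j,r_j)$ (as was already done in the proof of Theorem \ref{sec3_theorem01}) and then apply Corollary \ref{sec2_theorem_6} for the lower bound and Corollary \ref{sec2_theorem_8} for the upper bound term by term.

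First I would recall the identity
$$J_q({\bf p}|{\bf r})=\frac{1}{1-q}\sum_{j=1}^n\bigl\{d_q(p_j,r_j)+d_{1-q}(p_j,r_j)\bigr\},$$
which follows directly from the definition of $D_q^T$ and the notation \eqref{sec1_eq_add01}. Since ${\bf p},{\bf r}$ are probability distributions, $0<p_j,r_j\le 1$ for each $j$, which puts us exactly in the regime of Corollary \ref{sec2_theorem_6}.

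For the lower bound, I would apply Corollary \ref{sec2_theorem_6} with $a=p_j$, $b=r_j$ and parameter $q$ to get
$$\tfrac12 q(1-q)\frac{(p_j-r_j)^2}{\max\{p_j,r_j\}}\le \tfrac{p_jr_j}{2}E_q\!\left(\tfrac{1}{p_j},\tfrac{1}{r_j}\right)\le d_q(p_j,r_j),$$
and then apply the same inequality with parameter $1-q$; because $E_{1-q}=E_q$ (as noted in Theorem \ref{sec2_theorem_5}) and $q(1-q)$ is symmetric in $q\leftrightarrow 1-q$, the two middle terms coincide. Adding the two chains and summing over $j$ gives
$$q(1-q)\sum_{j=1}^n\frac{(p_j-r_j)^2}{\max\{p_j,r_j\}}\le \sum_{j=1}^n p_jr_j E_q\!\left(\tfrac{1}{p_j},\tfrac{1}{r_j}\right)\le \sum_{j=1}^n\bigl\{d_q(p_j,r_j)+d_{1-q}(p_j,r_j)\bigr\}.$$
Dividing by $1-q>0$ and invoking the identity for $J_q$ yields the first two inequalities of \eqref{sec3_eq04}.

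For the upper bound, I would apply Corollary \ref{sec2_theorem_8} with $a=p_j$, $b=r_j$, $p=q$, which gives
$$d_q(p_j,r_j)+d_{1-q}(p_j,r_j)\le (1-q)\frac{(p_j-r_j)^2(p_j+r_j)}{p_jr_j}.$$
Summing over $j$ and dividing by $1-q$ produces the right-hand inequality of \eqref{sec3_eq04}. There is no real obstacle here, since both auxiliary results are stated in the appropriate form and apply pointwise; the only mild subtlety is keeping track of the normalization factor $\frac{1}{1-q}$ and of the symmetry $E_q=E_{1-q}$ when doubling the Corollary \ref{sec2_theorem_6} inequality. The case $q=0$ is handled separately as an equality (both sides reduce to $\sum_j(r_j-p_j)$-type expressions that vanish or agree trivially because $d_0\equiv d_1\equiv 0$).
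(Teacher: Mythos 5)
Your proposal is correct and follows essentially the same route as the paper: rewrite $J_q$ as $\frac{1}{1-q}\sum_j\{d_q(p_j,r_j)+d_{1-q}(p_j,r_j)\}$, apply Corollary \ref{sec2_theorem_6} with parameters $q$ and $1-q$ together with the symmetry $E_q=E_{1-q}$ for the lower bounds, and Corollary \ref{sec2_theorem_8} for the upper bound. No substantive differences from the paper's argument.
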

\begin{proof}
Putting $a:=p_j$, $b:=r_j$ and $p:=q$ in \eqref{sec1_eq07_21}, we deduce
\begin{equation*}
\frac{1}{2}q\left( {1 - q} \right)\frac{{{{\left( {p_j - r_j} \right)}^2}}}{{\max \{ p_j,r_j\} }} \le \frac{p_jr_j}{2}E_q\left(\frac{1}{p_j},\frac{1}{r_j}\right)\le d_q(p_j,r_j),
\end{equation*}
and
\begin{equation*}
\frac{1}{2}q\left( {1 - q} \right)\frac{{{{\left( {p_j - r_j} \right)}^2}}}{{\max \{ p_j,r_j\} }} \le \frac{p_jr_j}{2}E_{1-q}\left(\frac{1}{p_j},\frac{1}{r_j}\right)\le d_{1-q}(p_j,r_j).
\end{equation*}
Taking into account that
$$E_q\left(\frac{1}{p_j},\frac{1}{r_j}\right)=E_{1-q}\left(\frac{1}{p_j},\frac{1}{r_j}\right),$$
and by taking the sum on $j=1,2,\cdots,n$, we have 
$$q(1-q)\sum_{j=1}^{n}\frac{{{{\left( {p_j - r_j} \right)}^2}}}{{\max \{ p_j,r_j\} }} \le \sum_{j=1}^{n}p_jr_jE_q\left(\frac{1}{p_j},\frac{1}{r_j}\right)\le \sum_{j=1}^{n}\left(d_q(p_j,r_j)+d_{1-q}(p_j,r_j)\right)$$
we prove the lower bounds of $J_q({\bf p}|{\bf r})$. 
To prove the upper bound of $J_q({\bf p}|{\bf r})$,
we put $a:=p_j$, $b:=r_j$ and $p:=q$ in inequality \eqref{sec1_eq07_22}. 
Then we deduce
$$d_q(p_j,r_j)+d_{1-q}(p_j,r_j)\leq (1-q)\frac{{{{\left( {p_j - r_j} \right)}^2(p_j+r_j)}}}{p_jr_j}.$$
By taking the sum on $j=1,2,\cdots,n$, we find $$\sum_{j=1}^{n}\left(d_q(p_j,r_j)+d_{1-q}(p_j,r_j)\right)\leq (1-q)\sum_{j=1}^{n}\frac{{{{\left( {p_j - r_j} \right)}^2(p_j+r_j)}}}{p_jr_j}.$$
 Consequently, we prove the inequalities of the statement. 
\end{proof}

We also give the further bounds on  the Jeffreys-Tsallis divergence
by the use of  Cartwright-Field inequality given in \eqref{sec1_eq05}.

\begin{theorem}\label{se3_theorem05}
For two probability distributions
${\bf p}:=\{p_1,\cdots,p_n\}$ and ${\bf r}:=\{r_1,\cdots,r_n\}$ with $p_j>0$ and $r_j>0$ for all $j=1,\cdots,n$, and $0\leq q<1$, we have
\begin{eqnarray}
&& \frac{q}{8}\sum_{j=1}^n\left(p_j-r_j\right)^2\left(\frac{1}{p_j+\max\{p_j,r_j\}}+\frac{1}{r_j+\max\{p_j,r_j\}}\right) \nonumber\\
&&\le JS_q({\bf p}|{\bf r})\le \frac{q}{8}\sum_{j=1}^n\left(p_j-r_j\right)^2\left(\frac{1}{p_j+\min\{p_j,r_j\}}+\frac{1}{r_j+\min\{p_j,r_j\}}\right) \label{sec3_eq05}
\end{eqnarray}
\end{theorem}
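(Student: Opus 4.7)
The plan is to rewrite $JS_q({\bf p}|{\bf r})$ as a sum of $d_q$-quantities and then apply the Cartwright-Field inequality \eqref{sec1_eq05} term-by-term. Setting $m_j := (p_j+r_j)/2$ and using the definition $d_q(a,b) = qa + (1-q)b - a^qb^{1-q}$, a direct rearrangement gives
$$
p_j - p_j^q m_j^{1-q} = d_q(p_j,m_j) + (1-q)(p_j - m_j).
$$
Since $\sum_j p_j = \sum_j m_j = 1$, the linear corrections cancel under the sum, so
$$
D_q^T\!\left({\bf p}\,\Big|\,\tfrac{{\bf p}+{\bf r}}{2}\right) = \frac{1}{1-q}\sum_{j=1}^n d_q(p_j,m_j),
$$
and the same holds with the roles of ${\bf p}$ and ${\bf r}$ swapped. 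Consequently
$$
JS_q({\bf p}|{\bf r}) = \frac{1}{2(1-q)}\sum_{j=1}^n\bigl(d_q(p_j,m_j) + d_q(r_j,m_j)\bigr).
$$

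Next I would apply \eqref{sec1_eq05} to each $d_q(p_j,m_j)$ with $a=p_j$, $b=m_j$; the key simplification is that $a-b = (p_j-r_j)/2$, so the squared numerator $(p_j-r_j)^2/4$ is already symmetric in $p_j,r_j$. The only index-dependent factor is then the $\max$ or $\min$ appearing in the denominator. A short case split according to whether $p_j \lessgtr r_j$ yields
$$
2\max\{p_j,m_j\} = p_j + \max\{p_j,r_j\},\qquad 2\min\{p_j,m_j\} = p_j + \min\{p_j,r_j\},
$$
together with analogous identities for $\max\{r_j,m_j\}$ and $\min\{r_j,m_j\}$ obtained by interchanging $p_j \leftrightarrow r_j$ on the right.

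Combining Cartwright-Field with these identities, I would obtain the pointwise bounds
$$
\frac{q(1-q)(p_j-r_j)^2}{4}\!\left(\frac{1}{p_j+\max\{p_j,r_j\}}+\frac{1}{r_j+\max\{p_j,r_j\}}\right) \le d_q(p_j,m_j)+d_q(r_j,m_j),
$$
and the reverse inequality with $\max$ replaced by $\min$. Substituting into the identity for $JS_q$ above cancels the factor $(1-q)$ and leaves the advertised constant $q/8$; summing over $j$ then gives \eqref{sec3_eq05}.

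I do not anticipate any serious obstacle. The two points requiring minor care are the vanishing of the linear correction in the first step (an immediate consequence of the normalization $\sum_j p_j = \sum_j r_j = 1$) and the bookkeeping of $\max\{p_j,m_j\}$ and $\min\{p_j,m_j\}$ in terms of $\max\{p_j,r_j\}$ and $\min\{p_j,r_j\}$, which is settled cleanly by the case split indicated.
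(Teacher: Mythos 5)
Your proof is correct and takes essentially the same route as the paper: rewrite $JS_q({\bf p}|{\bf r})=\frac{1}{2(1-q)}\sum_{j=1}^n\bigl(d_q(p_j,m_j)+d_q(r_j,m_j)\bigr)$ with $m_j:=(p_j+r_j)/2$ and apply the Cartwright--Field inequality \eqref{sec1_eq05} termwise, using $(p_j-m_j)^2=(p_j-r_j)^2/4$ and $2\max\{p_j,m_j\}=p_j+\max\{p_j,r_j\}$. The only cosmetic difference is that the paper cancels the linear corrections termwise by pairing the $p_j$ and $r_j$ contributions (so no appeal to the normalization $\sum_j p_j=\sum_j m_j=1$ is needed), whereas you invoke that normalization; both are valid.
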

\begin{proof}
For $q=0$, we have the equality. We assume $0<q<1$. By direct calculations, we have
\begin{eqnarray*}
&&JS_q\left({\bf p}|{\bf r}\right)=\frac{1}{2}D_q^T\left({\bf p}|{\frac{{\bf p+r}}{2}}\right)+\frac{1}{2}D_q^T\left({\bf r}|{\frac{{\bf p+r}}{2}}\right)\\
&&=\frac{1}{2(1-q)}\sum_{j=1}^n\left\{p_j-p_j^q\left(\frac{p_j+r_j}{2}\right)^{1-q}+r_j-r_j^q\left(\frac{p_j+r_j}{2}\right)^{1-q}\right\}\\
&&=\frac{1}{2(1-q)}\sum_{j=1}^n\left\{qp_j+(1-q)\frac{p_j+r_j}{2}-p_j^q\left(\frac{p_j+r_j}{2}\right)^{1-q}+qr_j+(1-q)\frac{p_j+r_j}{2}-r_j^q\left(\frac{p_j+r_j}{2}\right)^{1-q}\right\}\\
&&=\frac{1}{2(1-q)}\sum_{j=1}^n\left\{d_q\left(p_j,\frac{p_j+r_j}{2}\right)+d_q\left(r_j,\frac{p_j+r_j}{2}\right)\right\}.
\end{eqnarray*}
Using inequality  \eqref{sec1_eq05}, we deduce
$$
\frac{q(1-q)}{4}\frac{(p_j-r_j)^2}{p_j+\max\{p_j,r_j\}}\le d_q\left(p_j,\frac{p_j+r_j}{2}\right)
\le \frac{q(1-q)}{4}\frac{(p_j-r_j)^2}{p_j+\min\{p_j,r_j\}}
$$
and
$$
\frac{q(1-q)}{4}\frac{(p_j-r_j)^2}{r_j+\max\{p_j,r_j\}}\le d_q\left(r_j,\frac{p_j+r_j}{2}\right)
\le \frac{q(1-q)}{4}\frac{(p_j-r_j)^2}{r_j+\min\{p_j,r_j\}}.
$$
From the above inequalities we have the statement, by summing on $j=1,2,\cdots,n$.
\end{proof}

It is quite natural to extend the Jensen-Shannon-Tsallis divergence to the following form:
$$
JS_q^v({\bf p}|{\bf r}):=v D_q^T({\bf p}|v{\bf p}+(1-v){\bf r})+(1-v) D_q^T({\bf r}|v{\bf p}+(1-v){\bf r}),\,\, (0\le v\le 1,\,\,q>0,\,\,q\ne 1).
$$
We call this the $v$-weighted Jensen-Shannon-Tsallis divergence. For $v=1/2$, we find that $JS_q^{1/2}({\bf p}|{\bf r})=JS_q({\bf p}|{\bf r})$ which is the Jensen-Shannon-Tsallis divergence. For this quantity $JS_q^v({\bf p}|{\bf r})$, we can obtain the following result in a way similar to the proof of Theorem \ref{se3_theorem05}.

\begin{proposition}
For two probability distributions
${\bf p}:=\{p_1,\cdots,p_n\}$ and ${\bf r}:=\{r_1,\cdots,r_n\}$ with $p_j>0$ and $r_j>0$ for all $j=1,\cdots,n$,  $0\leq q<1$ and $0\le v \le 1$, we have
\begin{eqnarray*}
&& \frac{qv(1-v)}{2}\sum_{j=1}^n\left(p_j-r_j\right)^2\left(\frac{1-v}{v p_j+(1-v)\max\{p_j,r_j\}}+\frac{v}{(1-v) r_j+ v \max\{p_j,r_j\}}\right) \\
&&\le JS_q^v({\bf p}|{\bf r})\le \frac{qv(1-v)}{2}\sum_{j=1}^n\left(p_j-r_j\right)^2\left(\frac{1-v}{v p_j+(1-v)\min\{p_j,r_j\}}+\frac{v}{(1-v) r_j+v \min\{p_j,r_j\}}\right).
\end{eqnarray*}
\end{proposition}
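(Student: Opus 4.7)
The plan mirrors the proof of Theorem \ref{se3_theorem05}. Setting $m_j:=vp_j+(1-v)r_j$, I would first rewrite the $v$-weighted Jensen-Shannon-Tsallis divergence in terms of the quantity $d_q$. Using the definition of $D_q^T$, one has
$$
(1-q)\,JS_q^v({\bf p}|{\bf r})=\sum_{j=1}^n\Bigl\{v\bigl(p_j-p_j^qm_j^{1-q}\bigr)+(1-v)\bigl(r_j-r_j^qm_j^{1-q}\bigr)\Bigr\},
$$
and a direct expansion gives the identity
$$
v\,d_q(p_j,m_j)+(1-v)\,d_q(r_j,m_j)=v\bigl(p_j-p_j^qm_j^{1-q}\bigr)+(1-v)\bigl(r_j-r_j^qm_j^{1-q}\bigr),
$$
since $q\bigl(vp_j+(1-v)r_j\bigr)+(1-q)m_j=m_j=vp_j+(1-v)r_j$. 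Hence
$$
JS_q^v({\bf p}|{\bf r})=\frac{1}{1-q}\sum_{j=1}^n\bigl\{v\,d_q(p_j,m_j)+(1-v)\,d_q(r_j,m_j)\bigr\}.
$$

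Next I would apply the Cartwright-Field inequality \eqref{sec1_eq05} to each term $d_q(p_j,m_j)$ and $d_q(r_j,m_j)$. Here one has $p_j-m_j=(1-v)(p_j-r_j)$ and $r_j-m_j=-v(p_j-r_j)$, so the numerators contribute $(1-v)^2(p_j-r_j)^2$ and $v^2(p_j-r_j)^2$ respectively. The key observation for the denominators is the elementary identity
$$
\max\{p_j,m_j\}=vp_j+(1-v)\max\{p_j,r_j\},\qquad \max\{r_j,m_j\}=(1-v)r_j+v\max\{p_j,r_j\},
$$
with the analogous formulas for $\min$. These are verified by separating the two cases $p_j\ge r_j$ and $p_j<r_j$; each side of the identity coincides with the correct value of $\max\{p_j,m_j\}$ in both cases (and similarly for $\max\{r_j,m_j\}$).

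Combining everything yields, for each $j$,
$$
\frac{q(1-q)}{2}(p_j-r_j)^2\!\left\{\frac{v(1-v)^2}{vp_j+(1-v)\max\{p_j,r_j\}}+\frac{v^2(1-v)}{(1-v)r_j+v\max\{p_j,r_j\}}\right\}\le v\,d_q(p_j,m_j)+(1-v)\,d_q(r_j,m_j),
$$
with the reverse bound obtained by replacing $\max$ by $\min$. Factoring out $v(1-v)$, summing over $j$, dividing by $(1-q)$ and using the expression for $JS_q^v$ obtained above gives exactly the claimed double inequality. The only non-routine part is the $\max/\min$ identity for convex combinations, but this reduces to a short case check, so no serious obstacle is expected.
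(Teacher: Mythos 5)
Your proposal is correct and follows essentially the same route as the paper: the same rewriting of $JS_q^v$ as $\frac{1}{1-q}\sum_j\{v\,d_q(p_j,m_j)+(1-v)\,d_q(r_j,m_j)\}$ followed by the Cartwright-Field inequality applied to each term with weights $v$ and $1-v$. Your explicit verification of the identities $\max\{p_j,m_j\}=vp_j+(1-v)\max\{p_j,r_j\}$ and $\max\{r_j,m_j\}=(1-v)r_j+v\max\{p_j,r_j\}$ is a detail the paper leaves implicit, but it does not change the argument.
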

\begin{proof}
We calculate as
\begin{eqnarray*}
&&JS_q^v({\bf p}|{\bf r})=\frac{v}{1-q}\sum_{j=1}^n\left\{p_j-p_j^q\left(vp_j+(1-v)r_j\right)^{1-q}\right\}+\frac{1-v}{1-q}\sum_{j=1}^n\left\{r_j-r_j^q\left(vp_j+(1-v)r_j\right)^{1-q}\right\}\\
&&=\frac{1}{1-q}\sum_{j=1}^n\left\{vp_j+(1-v)r_j-vp_j^q\left(vp_j+(1-v)r_j\right)^{1-q}-(1-v)r_j^q\left(vp_j+(1-v)r_j\right)^{1-q}\right\}\\
&&=\frac{v}{1-q}\sum_{j=1}^n\left\{qp_j+(1-q)\left(vp_j+(1-v)r_j\right)-p_j^q\left(vp_j+(1-v)r_j\right)^{1-q}\right\}\\
&&+\frac{1-v}{1-q}\sum_{j=1}^n\left\{qr_j+(1-q)\left(vp_j+(1-v)r_j\right)-r_j^q\left(vp_j+(1-v)r_j\right)^{1-q}\right\}\\
&&=\frac{1}{1-q}\sum_{j=1}^n\left\{v d_q\left(p_j,vp_j+(1-v)r_j\right)+(1-v) d_q\left(r_j,vp_j+(1-v)r_j\right)\right\}.
\end{eqnarray*}
Using inequality  \eqref{sec1_eq05}, we deduce
$$\frac{q(1-q)}{2}\frac{(1-v)^2(p_j-r_j)^2}{vp_j+(1-v)\max\{p_j,r_j\}} 
\le d_q\left(p_j,vp_j+(1-v)r_j\right)  \le \frac{q(1-q)}{2}\frac{(1-v)^2(p_j-r_j)^2}{vp_j+(1-v)\min\{p_j,r_j\}} 
$$
and
$$\frac{q(1-q)}{2}\frac{v^2(p_j-r_j)^2}{(1-v)r_j+v\max\{p_j,r_j\}} 
\le d_q\left(r_j,vp_j+(1-v)r_j\right)  \le \frac{q(1-q)}{2}\frac{v^2(p_j-r_j)^2}{(1-v)r_j+v\min\{p_j,r_j\}}. 
$$
Multiplying $v$ and $1-v$ by the above inequalities, respectively, and then taking the sum on $j=1,2,\cdots,n$, we obtain the statement.
\end{proof}

\section{Conclusion}

We obtained new inequalities which improve classical Young inequality by analytical calculations with known inequalities.
We also obtained some bounds on the Jeffreys-Tsallis divergence and the Jensen-Shannon-Tsallis divergence. At this point, we do not know clearly that the obtained bounds will play any role in information theory. 
However, if there exsits a purpose to find the meaning of the parameter $q$ in divergences based on Tsallis divergence, then we may state that almost theorems (except for Theorem \ref{sec3_theorem02})  hold for $0\le q <1$. In the first author's previous studies \cite{F2004,F2006}, some results related to Tsallis divergence (relative entropy) are still true for $0\le q <1$, while some results related to Tsallis entropy are still true for $q>1$. In this paper, we treated the Tsallis type divergence so it is shown that almost results are true for $0\le q <1$. This insgiht may give a rough meaning of the parameter $q$.

Since our results in Section 3 are based on the inequalities in Section 2, we summerize on the tightness for our obtained inequalities in Section 2. The double inequality \eqref{sec2_eq01_1} is a counterpart of the double inequality \eqref{sec1_eq07}  for $a,b\in (0,1]$. Therefore they can not be compared each other from the point of view on the tightness, since the conditions are different. The double inequality \eqref{sec2_eq01_1} was used to obtain Theorem \ref{sec3_theorem03}. The double inequality 
 \eqref{sec2_eq01} is essentially Cartwright-Field inequality itself, and it was used to obtain Theorem \ref{sec3_theorem01} as the first result in Section 3. The results in Theorem \ref{sec2_theorem02} are mathematical properties on $d_p(a,b)$.  The inequalities given in \eqref{sec1_eq07_1} gave an improvement of the left hand side in the  inequality \eqref{sec1_eq05} for the case $a,b \ge 1$ and we obtained Theorem \ref{sec3_theorem04} by \eqref{sec1_eq07_1}. We obtained the upper bound of $d_p(a,b)$ as a counterpart of \eqref{sec1_eq07_1} for a general $a,b>0$. This is used to prove Corollary \ref{sec2_theorem_8} which was used to prove Theorem \ref{sec3_theorem04}. However, we find that the upper bound of $d_p(a,b)+d_{1-p}(a,b)$ given in \eqref{sec1_eq07_22} is not tighter than one in \eqref{sec2_eq01}.
 
Finally, Theorem \ref{sec3_theorem02} can be obtained from the  convexity/concavity of the function $t^{1-q}$.  The studies to obtain much sharper bounds will be continued.
We extend the Jensen-Shannon-Tsallis divergence to the following:
$$
JS_q^v({\bf p}|{\bf r}):=v D_q^T({\bf p}|v{\bf p}+(1-v){\bf r})+(1-v) D_q^T({\bf r}|v{\bf p}+(1-v){\bf r}),\,\, (0\le v\le 1,\,\,q>0,\,\,q\ne 1),
$$
and we call this the $v$-weighted Jensen-Shannon-Tsallis divergence. For $v=1/2$, we find that $JS_q^{1/2}({\bf p}|{\bf r})=JS_q({\bf p}|{\bf r})$ which is the Jensen-Shannon-Tsallis divergence. For this quantity as a information-theoretic divergence measure $JS_q^v({\bf p}|{\bf r})$, we obtained several characterizations. 
\section*{Acknowledgements}
The authors would like to thank the referees for their careful and insightful comments to improve our manuscript.
The author (S.F.) was partially supported by JSPS KAKENHI Grant Number 21K03341.

\section*{Author Contributions}
The work presented here was carried out in collaboration between all authors. The study was initiated by the second author. He played also the role of the corresponding author. All authors contributed equally and significantly in writing this article. All authors have read and approved the final manuscript.

\section*{Conflicts of Interest}
The authors declare no conflict of interest.

\end{document}